\documentclass[11pt,reqno,a4paper]{amsart}

\usepackage{amsmath,amsthm,amssymb,mathtools,bbm}
\usepackage{tikz}
\usepackage{bookmark}
\usepackage{hyperref}
\hypersetup{pdfstartview={FitH}}

\theoremstyle{plain}
\newtheorem{theorem}{Theorem}
\newtheorem{proposition}[theorem]{Proposition}
\newtheorem{corollary}[theorem]{Corollary}
\newtheorem{lemma}[theorem]{Lemma}

\newtheorem*{question}{Question}
\newtheorem*{problem}{Problem}

\theoremstyle{definition}

\numberwithin{equation}{section}

\renewcommand{\leq}{\leqslant}
\renewcommand{\le}{\leqslant}
\renewcommand{\geq}{\geqslant}
\renewcommand{\ge}{\geqslant}

\newcommand{\T}{\mathbb{T}}
\newcommand{\Z}{\mathbb{Z}}
\newcommand{\R}{\mathbb{R}}

\newcommand{\e}{\mathrm{e}}
\newcommand{\dist}{\operatorname{dist}}
\newcommand{\card}{\operatorname{card}}
\newcommand{\norm}[1]{\lVert #1 \rVert}

\begin{document}

\title[Near-optimal density theorems for large point configurations]{Near-optimal density theorems for large dilates of large point configurations}

\author[Vjekoslav Kova\v{c}]{Vjekoslav Kova\v{c}}
\address{Department of Mathematics, Faculty of Science, University of Zagreb, Bijeni\v{c}ka cesta 30, 10000 Zagreb, Croatia}
\email{vjekovac@math.hr}

\author[Adian Anibal Santos Sep\v{c}i\'{c}]{Adian Anibal Santos Sep\v{c}i\'{c}}
\address{Bachelor's Program in Mathematics, Faculty of Science, University of Zagreb, Bijeni\v{c}ka cesta 30, 10000 Zagreb, Croatia}
\email{adian.anibal.santos.sepcic@student.math.hr}

\subjclass[2020]{Primary 28A75; 
Secondary 05D10, 
11K38, 
11L15} 

\keywords{Euclidean Ramsey theory, density, isometry, discrepancy, Weyl sums}

\begin{abstract}
We study density thresholds that force a measurable set $E\subseteq \R^d$ to contain all sufficiently large similar copies of every $n$-point configuration. We prove a lower bound of the form $1-O((\log n)/n)$, which matches the known upper bound up to the logarithmic factor, thus essentially resolving a problem posed by Falconer, Yavicoli, and the first author of the present paper. We also study the same problem for embeddings of $n$-point configurations into $\R^d$ equipped with the $\ell^p$ norm, obtaining an asymptotically sharp bound $1-1/n+o(1/n)$, as soon as $p\in(1,\infty)\setminus\{2\}$. In the proof of the former estimate we use equidistribution of polynomial sequences modulo $1$ combined with probabilistic thinning. The proof of the latter estimate relies on the geometry of the $\ell^p$ spaces for $p\neq2$.
\end{abstract}

\maketitle



\section{Introduction}

A classical theme in Euclidean Ramsey theory asks which large-scale finite point configurations must occur in measurable sets of positive (upper) density. Bourgain's theorem on large simplices \cite{Bou86} initiated a long line of work on similar density theorems. Notable examples are the results on product configurations \cite{LM16:prod,LM19:hypergraphs,DK22,DS25}, distance graphs \cite{LM20,KP24}, and arithmetic progressions and their generalizations \cite{CMP17,DKR18,DK21,DK22}. A very brief survey of the topic can be found in \cite{Kovac:survey}, while the reader can consult the introductory sections of the recent papers \cite{Kov22} and \cite{Kov26} for a more detailed overview of the literature.

For a measurable set $E\subseteq\R^d$ we write
\[ \overline{\textup{d}}(E):=\limsup_{R\to\infty}\frac{|E\cap[-R/2,R/2]^d|}{R^d}, \qquad
\underline{\textup{d}}(E):=\liminf_{R\to\infty}\frac{|E\cap[-R/2,R/2]^d|}{R^d} \]
for its \emph{upper} and \emph{lower asymptotic density}, respectively.
If the actual limit exists, we write it as $\textup{d}(E)$ and call it simply the \emph{density} of $E$.
The \emph{upper Banach density} of $E$ is
\[ \overline{\rho}(E):=\lim_{R\to\infty} \sup_{x\in\R^d} \frac{|E\cap(x+[0,R]^d)|}{R^d}, \]
and it is well known that the limit exists.
(Often, the notation $\overline{\delta}(E)$ is used instead.)
Clearly,
\[ \underline{\textup{d}}(E)\leq \overline{\textup{d}}(E)\leq \overline{\rho}(E). \]
Moreover, the latter notion is so flexible that the cube $[0,1]^d$, dilates of which are considered, can be replaced with a ball $\textup{B}(0,1)$, or any other compact convex set with nonempty interior; see, for example, \cite{BPPhD}.
Thus, for instance, we have
\[
\overline{\rho}(E) = \lim_{R\to\infty} \sup_{x\in\R^d} \frac{|E\cap \textup{B}(x,R)|}{|\textup{B}(x,R)|}
\]
as an exact equality, i.e., without any loss of constants.

Bourgain \cite[Section~1]{Bou86} and Graham \cite[Section~4]{Gra94} found examples of point configurations $P$ and sets $E\subseteq\R^d$ of positive density for which there exist arbitrarily large ``scales'' $r\in(0,\infty)$ such that $E$ does not contain an isometric copy of $r P$.
Motivated by this, Falconer, Yavicoli, and the first author \cite{FKY22} studied the critical density of $E$ with this property in terms of the pattern size $n=\card P$.

\begin{question}[{\cite[Question~1.6]{FKY22}}]
What is the smallest $0\leq \rho_{\min}(d,n) \leq 1$ such that every measurable set $E \subseteq \R^d$ of upper Banach density larger than $\rho_{\min}(d, n)$ contains all sufficiently large-scale similar copies of all $n$-point patterns?
\end{question}

The same paper \cite[Theorems~1.4 and~1.5]{FKY22} showed that
\begin{equation}\label{eq:FKYbound}
1 - \frac{10\log n}{n^{1/5}} \leq \rho_{\min}(d,n) \leq 1 - \frac{1}{n-1}.
\end{equation}
In fact, a trivial upper estimate, 
\[ \rho_{\min}(d,n) \leq 1 - \frac{1}{n}, \]
can be obtained by pigeonholing and merely considering the translates of $P$; it was noted in \cite[Proposition~1.3]{FKY22} that
\begin{quote}
\emph{if $P\subseteq\R^d$ has $n$ elements and we take any $r>0$, then every set $E\subseteq\R^d$ of upper Banach density larger than $1-1/n$ contains a translated copy of $r P$.}
\end{quote}
In \cite{FKY22} it took significant effort to improve this upper bound to just $1-1/(n-1)$, which suggested that the true behavior of $\rho_{\min}(d,n)$ could be of the form $1-O(1/n)$.
The following theorem gives a quantitative improvement of the lower bound in \eqref{eq:FKYbound}, which is almost sharp, up to a logarithmic factor.

\begin{theorem}
\label{theorem:l2}
There exists an absolute constant $C>0$ with the following property.
For every integer $d\ge 1$ and every sufficiently large $n$ there exist
\begin{itemize}
    \item a measurable set $E\subseteq\R^d$,
    \item an $n$-point configuration $P\subseteq\R^d$ contained in a line,
    \item and a sequence of positive numbers $(r_j)_{j}$ with $r_j\to\infty$,
\end{itemize}
such that
\[
\textup{d}(E)\ge 1-C\frac{\log n}{n}
\]
and $E$ contains no isometric copy of $r_jP$ for any $j$.
\end{theorem}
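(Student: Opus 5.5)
The plan is to adapt the Bourgain--Graham construction based on the quadratic form $x\mapsto|x|^2$. I take $E$ to be a set of the form
\[ E=\{x\in\R^d:\ |x|^2 \bmod 1\notin F\}, \]
where $F\subseteq\T$ is a finite union of arcs with $|F|\le C(\log n)/n$. Integrating in polar coordinates, $|x|^2 \bmod 1$ equidistributes on large spheres shells, so $\textup{d}(E)=1-|F|$ exists. For the configuration I take $P=\{k e_1: k=1,\dots,n\}$. An isometric copy of $rP$ has the form $\{x+kru: k=1,\dots,n\}$ with $|u|=1$, and then
\[ |x+kru|^2=a+bk+r^2k^2,\qquad a=|x|^2,\ b=2r\,x\cdot u . \]
I choose a fixed irrational $\alpha$ (badly approximable, say $\alpha=\sqrt2$) and scales $r_j\to\infty$ with $r_j^2\equiv\alpha \pmod 1$. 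It then suffices to construct $F$ such that, for \emph{every} $(a,b)\in\T\times\R$, some $k\in\{1,\dots,n\}$ satisfies $f_{a,b}(k):=a+bk+\alpha k^2 \in F \pmod 1$. This would show that $E$ contains no isometric copy of $r_jP$.

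\textbf{Construction of $F$ by probabilistic thinning.} Partition $\T$ into $n$ arcs $I_1,\dots,I_n$ of length $1/n$. Put each arc into $F$ independently with probability $p=C_0(\log n)/n$, and slightly enlarge each chosen arc, by a factor $2$, to allow for discretization. Then $\mathbb E|F|\lesssim p$, and by Chernoff $|F|\le 4p$ with high probability.

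\textbf{Reduction to a finite family.} Reduce $b$ modulo $1$; this does not change $f_{a,b}(k)\bmod 1$ since $k\in\Z$. Replace $(a,b)\in\T^2$ by the nearest point of a grid of mesh $n^{-3}$. This moves every value $f_{a,b}(k)$ by at most $2n^{-2}$, which the enlargement absorbs. There are then only $n^{6}$ grid pairs, so by the union bound it suffices to show the following. For each fixed $(a,b)$, the points $f_{a,b}(1),\dots,f_{a,b}(n)$ meet at least $M$ distinct arcs $I_i$, where $pM\ge 7\log n$. Indeed, the probability that all of those arcs are omitted is $(1-p)^M\le n^{-7}$.

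\textbf{Main obstacle: counting distinct arcs.} The key step, and the expected obstacle, is the lower bound on $M$, uniformly in $(a,b)$. I would use a second-moment (collision) argument. If $N_i$ denotes the number of $k$ with $f(k)\in I_i$, then Cauchy--Schwarz gives
\[ M\ge \frac{n^2}{\sum_i N_i^2}, \]
and $\sum_i N_i^2$ is at most the number of pairs $(k,l)$ with $\|f(k)-f(l)\|_{\T}\le 1/n$.

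Writing $h=k-l$, we have $f(k)-f(l)=hb+\alpha h^2+2\alpha h l$, which is a \emph{linear} sequence in $l$ with slope $2\alpha h$. For badly approximable $\alpha$, Erd\H{o}s--Tur\'an together with the standard estimate $\sum_{m\le H}\min(n,\|2\alpha h m\|^{-1})$ bounds the number of $l\le n$ landing in a given arc of length $1/n$ by $1+O(D_h n)$. Averaging over $h\le n$ (using that $h\alpha$ has good Diophantine behaviour on average) should give a collision count of $O(n\log n)$, hence $M\gtrsim n/\log n$. This already yields the theorem with $p\sim(\log n)^2/n$.

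To reach the stated $C(\log n)/n$, I would refine this step. The first option is to show that the collisions are $O(n)$ for all but few $h$, handling small $h$ separately, since those contribute only $O(n)$ pairs. The second option is to exploit freedom in choosing $\alpha$, for instance $\alpha=1/2+$(badly approximable), or in choosing the points of $P$ on the line, so that the quadratic sequence hits $\gtrsim n$ distinct arcs for every $(a,b)$. I expect this uniform equidistribution estimate at the critical scale $1/n$ to be the heart of the proof. Once it is in place, fixing a realization of $F$ with $|F|\le C(\log n)/n$ that passes all $n^6$ tests gives the theorem for every $d$ simultaneously, because the construction depends only on $|x|^2$.
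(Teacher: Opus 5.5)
As written, the proposal does not prove the theorem. The gap is exactly the step you call the main obstacle. Because you partition $\T$ into arcs of length $1/n$, everything hinges on one claim: uniformly in $b\in\T$, the $n$ points $bk+\alpha k^2 \bmod 1$, $1\le k\le n$, meet $\gtrsim n$ distinct arcs of length $1/n$. That is a uniform upper bound on pair correlations of an inhomogeneous quadratic sequence at the critical scale $1/n$, and it is never established.

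Your sketch does not get there. The estimate $\sum_{h}\bigl(1+O(nD_h)\bigr)=O(n\log n)$ is only asserted. It would require controlling the partial quotients of $2h\alpha$ on average over $h$, which you do not do. Even granting it, you obtain only $M\gtrsim n/\log n$, hence $|F|\sim(\log n)^2/n$, as you note yourself. The two suggested refinements are not arguments, so the bound $C\log n/n$ is not reached.

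The paper never needs fine-scale information about $n$ points, because it puts the randomness into $P$ rather than into the forbidden set:
\begin{itemize}
\item the forbidden set is a single arc of length $\varepsilon_n$, and $A=1/Q$ with $Q\asymp n^4$ prime;
\item Weyl's inequality plus Erd\H{o}s--Tur\'an give discrepancy $\ll \log n/n$ for the \emph{long} sequence $(k^2/Q+Bk)_{0\le k<Q}$, uniformly in $B$;
\item so every interval of length $\asymp\varepsilon_n$ captures $\gtrsim\varepsilon_nQ$ indices, and a random $n$-subset misses such a set with probability $e^{-c\varepsilon_n n}$;
\item a union bound over a polynomial-size net in $(B,I)$ closes with $\varepsilon_n\asymp\log n/n$.
\end{itemize}

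Your own framework can nevertheless be rescued, because the scale $1/n$ is self-inflicted: $|F|\approx p$ whatever the arc length. Use arcs of length $1/N$ with $N>4n^4/c$, where $\|m\alpha\|\ge c/|m|$ for all $m\neq0$. Keep $p=C_0\log n/n$, and note that $f(k)-f(l)=(k-l)\bigl(b+\alpha(k+l)\bigr)$.

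Suppose $\|f(k)-f(l)\|\le 1/N$ and $\|f(k')-f(l')\|\le 1/N$, with $h=k-l\neq0$, $h'=k'-l'\neq0$ and $s=k+l\neq s'=k'+l'$. Multiplying the first by $h'$ and the second by $h$ and subtracting gives $\|hh'(s-s')\alpha\|\le 2n/N<c/(2n^3)\le c/|hh'(s-s')|$. This contradicts bad approximability. Hence all near-collisions share one value of $k+l$.

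It follows that each fine arc holds at most two of the points, since three would give two near-collisions with different values of $k+l$. So $M\ge n/2$ uniformly in $(a,b)$. The net (mesh $\asymp 1/N$ in $a$ and $\asymp 1/(nN)$ in $b$, absorbed by enlarging the chosen arcs) has $n^{O(1)}$ points, and the union bound closes with $p\asymp\log n/n$.

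That would be a genuinely different, Weyl-free proof with $P$ an arithmetic progression. This idea is what your proposal is missing.
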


This improves the lower bound on $\rho_{\min}(d,n)$ to the asymptotic estimate
\[ \rho_{\min}(d,n) \geq 1-C\frac{\log n}{n} \]
for all sufficiently large $n$. It does not, however, reach the conjectured $1-O(1/n)$ behavior.
On the other hand, we also found an entirely elementary argument that gives a weaker bound
\[ \rho_{\min}(d,n) \geq 1-\frac{C}{n^{1/2}}, \]
which already improves \eqref{eq:FKYbound}. We sketch this construction in the very brief Section~\ref{sec:elementary}, even though both approaches start with the same annular type of set $E$ given in Section~\ref{sec:reduction}.

Instead of proving Theorem~\ref{theorem:l2} directly, we will actually prove its generalization to the space $\R^d$ equipped with the $\ell^p$ norm,
\[ \|x\|_p := \Big( \sum_{i=1}^d |x_i|^p \Big)^{1/p} \]
for $x=(x_1,x_2,\ldots,x_d)\in\R^d$. Here we are only interested in the exponents $p\in(1,\infty)$.
Density theorems involving lengths measured in $\ell^p$ norms were first studied in the work of Cook, Magyar, and Pramanik \cite{CMP17}, and later also in \cite{DK21,DK22}, motivated by the fact that the results of those papers were not available in the Euclidean norm (that is, the $\ell^2$ norm).

\begin{theorem}
\label{theorem:lp}
Fix integers $d\ge 1$ and $p\ge 2$. There exists a constant $C_{d,p}\in(0,\infty)$ with the following property. 
For every sufficiently large integer $n$ there exist
\begin{itemize}
    \item a measurable set $E\subseteq\R^d$,
    \item an $n$-point configuration $P\subseteq\R^d$ contained in a line,
    \item and a sequence of positive numbers $(r_j)_{j}$ with $r_j\to\infty$,
\end{itemize}
such that
\begin{equation}\label{eq:mainlowerest}
\underline{\textup{d}}(E)\ge 1-C_{d,p}\frac{\log n}{n}
\end{equation}
and $E$ contains no $\ell^p$-isometric copy of $r_jP$ for any $j$.

Moreover, if $p$ is even, then the constant $C_{d,p}$ in \eqref{eq:mainlowerest} can be chosen independently of the dimension $d$ and the set $E$ can be chosen to have the actual density $\textup{d}(E)$.
\end{theorem}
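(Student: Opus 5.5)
We build $E$ as an annular set, following the hint in the introduction. Let $\phi(x)=\norm{x}_p^p=\sum_i|x_i|^p$. This is a polynomial when $p$ is even, and piecewise polynomial on orthants in general. Put
\[ E:=\{x\in\R^d : \{\lambda\,\phi(x)\}\notin B\}, \]
where $\{\cdot\}$ is the fractional part, $B\subseteq\T$ is a finite union of short intervals of total measure $\approx C(\log n)/n$, and $\lambda>0$ is fixed.

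Take $P=\{0,v,2v,\dots,(n-1)v\}$ along a line, or more generally $P=\{t_k v\}_{k<n}$ for well-chosen reals $t_k$. An $\ell^p$-isometric copy of $r_jP$ has the form $\{x+rt_k u\}$ with $\norm{u}_p=\norm{v}_p$. We require that for \emph{every} base point $x$ and direction $u$, at least one value $\lambda\phi(x+rt_ku)$ lands in $B$ modulo $1$. The function $t\mapsto\phi(x+tu)$ is, on pieces, a polynomial of degree at most $p$ in $t$, and its leading coefficient $\sum_i u_i^p$ is essentially $\norm{u}_p^p$ (exactly so for even $p$). So along the configuration we see a polynomial sequence
\[ k\mapsto \lambda r^p\norm{v}_p^p\,t_k^p+\text{lower-order terms}, \]
whose lower-order coefficients are arbitrary, since they depend on $x$ and $u$. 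We choose $r_j$ with $\lambda r_j^p\norm{v}_p^p\equiv\alpha\pmod 1$ for a fixed irrational, or generic, $\alpha$ and $r_j\to\infty$. The top coefficient is then pinned down, and the free parameters are only the lower coefficients $(\beta_1,\dots,\beta_{p-1},\beta_0)$, which range over a compact torus of dimension depending only on $p$ (and on $d$ when $p$ is odd, because of the orthant pieces).

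The core step is to construct $B$, with $|B|\lesssim(\log n)/n$, meeting every polynomial orbit $\{\alpha t_k^p+\beta_{p-1}t_k^{p-1}+\dots+\beta_0\}_{k<n}$ for all $\beta$. We do this by probabilistic thinning. Take $B$ to be a random union of $m\approx C\log n$ arcs, each of length about $1/n$, with independent uniformly random centers. Cover the parameter torus by a $\delta$-net with $\delta\approx n^{-p}$ (coefficient precision), so the net has $n^{O(p)}$ points. For a fixed net point, Weyl/van der Corput equidistribution with $\alpha$ suitably Diophantine shows that the $n$ orbit points are $o(1)$-equidistributed in discrepancy; this holds uniformly in the lower coefficients, because the top coefficient alone controls the Weyl sums. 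The $n$ points therefore cover a positive proportion of $\T$ at scale $1/n$, so a single random arc hits the orbit with probability at least $c>0$. The probability that all $m$ arcs miss is at most $(1-c)^m\le n^{-Ap}$. A union bound over the net, with arcs slightly enlarged to absorb the $\delta$-error, gives a $B$ that hits every orbit. Then $|B|\approx m/n\approx(\log n)/n$, and by equidistribution of $\lambda\phi(x)$ modulo $1$ over large cubes, $\underline{\mathrm d}(E)\ge 1-C|B|$. For even $p$, $\phi$ is a single polynomial. Both the Weyl bounds and the density computation are then dimension-free, which gives a $d$-independent constant and a genuine density.

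The main obstacle is the uniform equidistribution step. We need discrepancy $o(1)$ at scale $1/n$, i.e. the orbit must meet a constant fraction of the $n$ arcs of length $1/n$. This is stronger than plain Weyl equidistribution; it is a covering statement at the finest scale, which Weyl sums only give at scale $n^{-c}$. If it fails, we will weaken the requirement. Arcs of length $n^{-1}$ combined with a $(\log n)$-fold redundancy can still work provided a positive proportion of orbit points are $1/n$-separated on average. We would get this from a second-moment (energy) bound, $\sum_{k,l}\mathbf 1[\|q(k)-q(l)\|<1/n]\lesssim n$, proved via large-sieve-type Weyl estimates for the differences $q(k)-q(l)$, whose top coefficient is again fixed by $\alpha$. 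For odd $p$, the orbit pieces coming from sign changes of coordinates add at most $O(d)$ breakpoints, which is handled by applying the argument on each of the $O_d(1)$ sub-progressions. This is where the constant acquires its dependence on $d$.
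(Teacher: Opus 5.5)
Your skeleton matches the paper's: an annular $E$, the top coefficient pinned by the choice of $r_j$, a polynomial-size net over the lower coefficients, and a union bound. But the randomness is on the wrong side. The step you flag as the main obstacle is a genuine gap, and your remedy does not close it. With $P$ an arithmetic progression of length $n$ and $B$ a union of $\approx C\log n$ random arcs of length $1/n$, you need the following: for every choice of lower coefficients, the $n$ values $q(k)$ meet a positive proportion of the $1/n$-cells of $\T$. Weyl's inequality with Erd\H{o}s--Tur\'an only gives discrepancy about $n^{-2^{1-p}}$ for $n$ terms. Discrepancy of that size does not rule out the points sitting in $\approx n^{2^{1-p}}$ tight clumps. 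In that case a random arc meets the orbit with probability only $\approx n^{2^{1-p}-1}$, and the union bound forces $|B|\gtrsim n^{-2^{1-p}}\log n$. Your energy bound $\sum_{k,l}\mathbf{1}[\norm{q(k)-q(l)}<1/n]\lesssim n$ would suffice. However, proving it via a Fourier majorant at scale $1/n$ amounts to $\sum_{1\le h\lesssim n}\bigl|\sum_{k<n}\e(hq(k))\bigr|^2\lesssim n^2$, i.e.\ square-root cancellation on average for Weyl sums of length $n$, uniformly in the lower coefficients. That is a pair-correlation statement at the finest scale. The standard differencing argument already loses logarithmic factors for $p=2$, and for $p\ge3$ it is far beyond what Weyl-type bounds give.

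The paper never needs equidistribution at scale $1/n$. It takes $A=1/Q$ with $Q\asymp n^{2^p}$ prime. Then for the \emph{long} sequence $(x_k(B))_{k<Q}$, Weyl and Erd\H{o}s--Tur\'an give discrepancy $\ll\log Q/Q^{2^{-p}}\ll\log n/n$, uniformly in $B$. So every interval of length $\asymp\varepsilon_n$ contains $\gtrsim\varepsilon_n Q$ indices. A uniformly random $n$-subset $P\subseteq\{0,\dots,Q-1\}$ misses such an interval with probability at most $e^{-4\varepsilon_n n/5}=n^{-4K_p/5}$, which beats a net of size $n^{O_p(1)}$. Thus the randomness goes into the configuration, the forbidden set is a single interval of length $\varepsilon_n$, and equidistribution is used only at scale $\varepsilon_n$, for a sequence much longer than $n$.

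Your treatment of odd $p$ also breaks down. On a piece of $t\mapsto\norm{x+tu}_p^p$, the leading coefficient is $\sum_i s_i u_i^p$ with $s_i=\operatorname{sgn}(x_i+tu_i)$. This equals $\norm{u}_p^p$ only if $s_i$ is the sign of $u_i$ for every $i$. The adversary places the breakpoints $-x_i/u_i$ through the free base point $x$. For example, take $d=2$, $u_1=-u_2$ and $x=(X,X)$ with $X$ large. Then the entire configuration lies on a single piece whose leading coefficient is $u_1^p+u_2^p=0$. Nothing is pinned modulo $1$, so running your argument on sub-progressions gives nothing. The paper avoids absolute values altogether. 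It uses the genuine polynomials $F_\sigma(x)=\sum_i\sigma_i x_i^p$ for all $\sigma\in\{-1,1\}^d$ and sets $E=\bigcap_\sigma E_\sigma$. For a copy with direction $v$ it picks $\sigma_i$ to be the sign of $v_i$. Then $F_\sigma(x+r_j k v)$ is a polynomial in $k$ with leading coefficient exactly $r_j^p\norm{v}_p^p=A+j$. The intersection over $\sigma$ costs the factor $2^d$ in $C_{d,p}$. This is why, for odd $p$, the constant depends on $d$ and only the lower density is bounded.
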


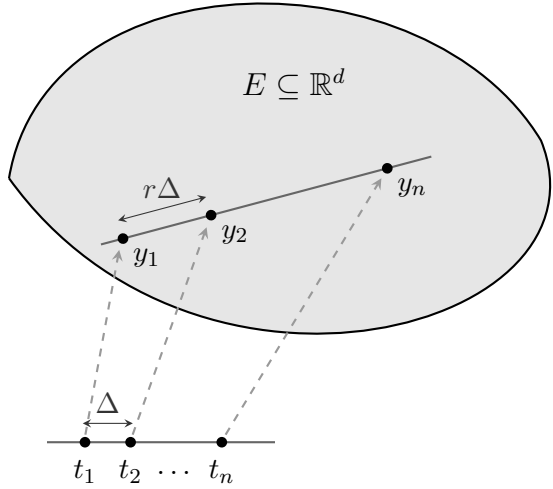
\begin{figure}
\begin{center}
\begin{tikzpicture}[scale=1.0, >=stealth]
    \draw[thick, black, fill=black!10] 
        (-0.5,2) .. controls (0,5) and (5,5) .. 
        (6.5,2.5) .. controls (7.5,0) and (2,-1.5) .. 
        (-0.5,2);
    \node[font=\large, black] at (3.25, 3.25) {$E \subseteq \mathbb{R}^d$};
    \begin{scope}[shift={(0.5,-2.5)}]
        \draw[thick, black!60] (-0.5, 1) -- (2.5, 1);
        \fill[black] (0,1) circle (2pt) node[below=3pt, black] {$t_1$};
        \fill[black] (0.6,1) circle (2pt) node[below=3pt, black] {$t_2$};
        \fill[black] (1.8,1) circle (2pt) node[below=3pt, black] {$t_n$};
        \node[black] at (1.2, 0.6) {$\dots$};
        \draw[<->, black!80, thin] (-0.03, 1.25) -- (0.63, 1.25) node[midway, above] {$\Delta$};
    \end{scope}
    \begin{scope}[shift={(1.0, 1.2)}, rotate=15]
        \draw[thick, black!60] (-0.3, 0) -- (4.2, 0);
        \fill[black] (0,0) circle (2pt) node[below right, black] {$y_1$};
        \fill[black] (1.2,0) circle (2pt) node[below right, black] {$y_2$};
        \fill[black] (3.6,0) circle (2pt) node[below right, black] {$y_n$};
        \draw[<->, black!80, thin] (0, 0.25) -- (1.2, 0.25) node[midway, above] {$r\Delta$};
    \end{scope}
    \draw[->, black!40, dashed, thick] (0.505, -1.4) -- (0.95, 1.05);
    \draw[->, black!40, dashed, thick] (1.127, -1.4) -- (2.1, 1.35);
    \draw[->, black!40, dashed, thick] (2.35, -1.4) -- (4.4, 2.0);
\end{tikzpicture}
\end{center}
\caption{$\{y_1,\ldots,y_n\}$ is an isometric copy of $\{t_1,\ldots,t_n\}$ dilated by $r$.}
\label{fig:dilcopy}
\end{figure}

By \cite[Proposition~1.3]{FKY22} again, estimate \eqref{eq:mainlowerest} is also sharp up to the logarithmic factor.
We should remark that, in the case of collinear configurations
\[ P=\{t_1<t_2<\cdots<t_n\}\subseteq\R\equiv \R\times\{0\}^{d-1}, \]
an $\ell^p$-isometric copy of $rP$ in $\R^d$ is simply any $n$-tuple of points
\[ y_1=x+t_1 v,\ \ y_2=x+t_2 v,\ \ \ldots,\ \ y_n=x+t_n v \]
such that $\|v\|_p=r$; see Figure~\ref{fig:dilcopy}.
This is proved in Lemma~\ref{lem:collinear-copy} in the appendix.

Restricting our attention to integer values of $p$ might seem a bit artificial. It is primarily dictated by our proof via Proposition~\ref{proposition:main} below, which is concerned with degree-$p$ polynomials. Moreover, there is a plausible conjecture on this topic concerning \emph{arithmetic progressions}, which are similar copies of
\begin{equation}\label{eq:pis0n}
P = \{ 0, 1, 2, \ldots, n-1 \}.
\end{equation}

\begin{problem}[{\cite[Problem~4]{DK22}}]
Prove or disprove that for every integer $n\geq 3$ and every exponent $p\in[1,\infty)\setminus\{1,2,\ldots,n-1\}$ there exists a number $D(n,p)\in(0,\infty)$ such that in every dimension $d\geq D(n,p)$ the following holds: if $E\subseteq\mathbb{R}^d$ is a measurable set satisfying $\overline{\rho}(E)>0$, then there exists $r_0=r_0(n,p,d,E)\in(0,\infty)$ such that for every $r\geq r_0$ one can find $x,v\in\mathbb{R}^d$ satisfying 
\[ x, x+v,\ldots, x+(n-1)v \in E \quad\text{and}\quad \|v\|_p=r. \]
\end{problem}

So far, this problem has been solved positively only for $n=3$; see \cite{CMP17} and the generalizations in \cite{DKR18,DK21}. Even if it asks about a density theorem with parameters that are allowed to depend arbitrarily on $n$, it is already an indication that the lower estimate \eqref{eq:mainlowerest} is less likely to hold for non-integral $p$.

If we no longer require the obstructing pattern $P$ to be collinear, then $\ell^p$ density theorems become much easier for every real exponent $p\neq 2$.

\begin{theorem}
\label{theorem:fulllp}
Fix an integer $d\ge 1$ and a real number $p\in(1,\infty)$ such that $p\neq 2$.
Then for every integer $n\ge 2d+1$ there exist
\begin{itemize}
    \item a measurable set $E\subseteq\R^d$,
    \item an $n$-point configuration $P\subseteq\R^d$,
    \item and a sequence of positive numbers $(r_j)_{j}$ with $r_j\to\infty$,
\end{itemize}
such that $E$ contains no $\ell^p$-isometric copy of $r_jP$ for any $j$, while
\[ \textup{d}(E)=1-\frac{1}{n-2d+2}. \]
\end{theorem}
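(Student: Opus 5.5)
The plan is to use the rigidity of $\ell^p$ isometries for $p\neq2$ to reduce the problem to a one-dimensional strip construction. The main obstacle is making a single set $E$ work at all scales $r_j$ simultaneously; that part is the least certain.

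\emph{Step 1: isometries.} For $p\in(1,\infty)\setminus\{2\}$, every $\ell^p$-isometry of $\R^d$ is affine by the Mazur--Ulam theorem. Its linear part preserves $\|\cdot\|_p$, so it is a signed permutation matrix; this is the classical Banach--Lamperti description. Hence every $\ell^p$-isometric copy of $rP$ has the form $x+r\sigma(P)$, where $x\in\R^d$ and $\sigma$ ranges over the finite hyperoctahedral group of $2^d d!$ signed permutations. This is the only place where $p\neq 2$ enters, and it removes all continuous rotational freedom.

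\emph{Step 2: one scale.} Set $m:=n-2d+2$. I would put a progression of length $m$ into $P$ along a direction $u$ all of whose coordinates have absolute value $1$, for instance $u=(1,\ldots,1)$. Then $\sigma(u)$ has first coordinate $\pm1$ for every signed permutation $\sigma$. Consider the slab-periodic set
\[ F_r:=\{x\in\R^d : \lfloor x_1/r\rfloor\not\equiv 0 \pmod m\}, \]
which has density exactly $1-1/m$. The first coordinates of $x+rk\sigma(u)$, $k=0,\ldots,m-1$, are $x_1\pm kr$. Their values of $\lfloor\cdot/r\rfloor$ are therefore $m$ consecutive integers, and one of them is divisible by $m$. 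So no copy $x+r\sigma(P)$ is contained in $F_r$, whatever the remaining points of $P$ are.

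\emph{Step 3: all scales at once (main obstacle).} A single $E$ must block $r_jP$ for a sequence $r_j\to\infty$ while keeping density exactly $1-1/m$. My first attempt would be a nested "annular" gluing as in Section~\ref{sec:reduction}. Choose $r_j$ increasing extremely fast, let $E$ coincide with $F_{r_j}$ on a shell $R_{j}\le\|x\|_\infty<R_{j+1}$ with $R_{j+1}\gg R_j\gg r_j$, and arrange the scales so that each $F_{r_j}$ is also a union of slabs of every $F_{r_k}$ with $k<j$, for instance by taking $r_{j}\in m r_{j-1}\Z$.

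There are two things to control here. First, copies of $r_kP$ lying in a shell of a different scale must still be blocked, which is what the divisibility compatibility between the $F_{r_j}$ is meant to ensure. Second, copies straddling a shell boundary must be handled, and this is where I expect the remaining $2d-2$ points of $P$ to be used. I would place them at points $\pm Le_i$, $i=2,\ldots,d$, with $L$ large, so that any copy meeting a boundary region is forced to have a point inside a neighboring shell in a controlled slab. I would also shave off the boundary regions, which have zero density.

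If this gluing forces any loss, the fallback is to sacrifice one progression point per coordinate pair. The count $n=m+2d-2$ leaves exactly that budget, and it produces precisely the density $1-\frac{1}{n-2d+2}$ claimed in the statement.
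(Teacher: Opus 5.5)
\textbf{Step 1 does not hold, and the rest of the argument depends on it.} An $\ell^p$-isometric copy of $rP$ is only a distance-preserving bijection defined on the finite set $rP$. Mazur--Ulam and the Banach--Lamperti description concern surjective isometries of all of $\R^d$, and for $p\neq2$ such partial isometries generally do not extend. Lemma~\ref{lem:collinear-copy} shows this directly. Your progression $\{rku\}$ has copies $x+rkv$ for \emph{every} $v$ with $\norm{v}_p=\norm{u}_p$, e.g.\ $v=d^{1/p}\mathbbm{e}_2$ when $d\ge2$. Such a copy has constant first coordinate, so it lies in a single slab of $F_r$, and Step 2 fails. Forcing rigidity is the real content of the proof, and it is what the remaining $2d-2$ points are for, not shell boundaries. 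The paper puts the progression along $\mathbbm{e}_1$, indexed by $k=-1,0,\ldots,n-2d$, and adds the points $\pm\mathbbm{e}_2,\ldots,\pm\mathbbm{e}_d$. Strict convexity of $\norm{\cdot}_p$ gives $v_i^-=-v_i^+$. The relations $\norm{v_i^+\mp u}_p^p=2$ and $\norm{v_i^+\mp v_m^+}_p^p=2$ then fall in the equality case of Clarkson's inequality (Lemma~\ref{lem:scalar}). Hence $u,v_2^+,\ldots,v_d^+$ have pairwise disjoint supports, so $u=\pm\mathbbm{e}_l$. This mechanism needs disjoint supports already in the model configuration, so it cannot rigidify your direction $(1,\ldots,1)$, and you give no substitute.

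\textbf{Step 3 also fails as designed.} On the $j$-th shell your set equals $F_{r_j}$, which contains slabs of width $(m-1)r_j$. Since $R_{j+1}\gg R_j\gg r_j$, the shell contains cubes of side $(m-1)r_j$ inside such a slab. So for every fixed $k$, once $(m-1)r_j$ exceeds the size of $r_kP$, $E$ contains translated copies of $r_kP$. No divisibility relation among the $r_j$ prevents this, and intersecting several $F_{r_j}$ pushes the density below $1-1/m$. The missing idea is to use one set with a fixed period and take all scales in a single residue class modulo that period, so no gluing is needed. The paper takes $\varepsilon=1/(n-2d+2)$, $E=\{x:(x_1+\cdots+x_d)\bmod 1\in[0,1-\varepsilon)\bmod 1\}$ and $r_j=j+\varepsilon$. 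Once $u=\pm\mathbbm{e}_l$ is known, the coordinate sums of $x+r_jku$, $k=-1,0,\ldots,n-2d$, are congruent modulo $1$ to $n-2d+2$ equally spaced points. These cannot all lie in a half-open arc of length $1-\varepsilon$. Granting rigidity, your own first-coordinate functional would work the same way with $\{x:x_1\bmod 1\in[0,1-1/m)\bmod 1\}$ and $r_j=j+1/m$.
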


Here it becomes important to clarify what a general $\ell^p$-isometric copy is.
If $P\subseteq\R^d$ is finite and $r\in(0,\infty)$, we say that a set $Y\subseteq\R^d$ is an \emph{$\ell^p$-isometric copy of $rP$} if there exists a bijection $\phi\colon P\to Y$ such that
\[ \norm{\phi(u)-\phi(v)}_p=r\norm{u-v}_p \]
for all $u,v\in P$.

Note that \cite[Proposition~1.3]{FKY22} still applies, since it only uses translates of a given configuration, and translations are $\ell^p$ isometries for every $p$.
The following is a clear consequence of this fact and Theorem~\ref{theorem:fulllp}. It fully resolves the $p\neq2$ analogue of the main question.

\begin{corollary}
The smallest number $0\leq\rho_{\min}(d,n,p)\leq1$ that forces every measurable set $E \subseteq \R^d$ of upper Banach density $\overline{\rho}(E)>\rho_{\min}(d,n,p)$ to contain $\ell^p$-isometric copies of all sufficiently large dilates of every $n$-point configuration is of the form
\[ \rho_{\min}(d,n,p) = 1 - \frac1n + O_d\Big(\frac1{n^2}\Big) \]
for every $p\in(1,\infty)\setminus\{2\}$.
\end{corollary}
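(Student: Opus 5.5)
The corollary follows by combining the two bounds already available: the translation argument of \cite[Proposition~1.3]{FKY22} for the upper bound, and Theorem~\ref{theorem:fulllp} for the lower bound. No new construction is needed; the only work is an elementary comparison of the two bounds.

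\emph{Upper bound.} Let $P\subseteq\R^d$ have $n$ points and let $r>0$ be arbitrary. By \cite[Proposition~1.3]{FKY22}, every measurable $E$ with $\overline{\rho}(E)>1-1/n$ contains a translate of $rP$. Translations are $\ell^p$ isometries, so this translate is an $\ell^p$-isometric copy of $rP$. Since this holds for every $r$, and not merely for large $r$, the threshold $1-1/n$ forces the required property. Hence $\rho_{\min}(d,n,p)\le 1-1/n$.

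\emph{Lower bound.} First take $n\ge 2d+1$, and let $E$, $P$ and $(r_j)_j$ be as in Theorem~\ref{theorem:fulllp}. Then
\[ \overline{\rho}(E)\ge \textup{d}(E)=1-\frac{1}{n-2d+2}. \]
Let $\theta<1-1/(n-2d+2)$. Then $\overline{\rho}(E)>\theta$. Since $r_j\to\infty$, there are arbitrarily large dilates $r_jP$ with no $\ell^p$-isometric copy in $E$. So $E$ does not contain all sufficiently large dilates of $P$, and the threshold $\theta$ does not force the property. It follows that
\[ \rho_{\min}(d,n,p)\ge 1-\frac{1}{n-2d+2}. \]

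\emph{Comparing the bounds.} The gap between the two bounds is
\[ \frac{1}{n-2d+2}-\frac1n=\frac{2d-2}{n(n-2d+2)}. \]
For $n\ge 4d$ we have $n-2d+2\ge n/2$, so the gap is at most $4d/n^2$.

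For the finitely many remaining values $n<4d$, we use only that $\rho_{\min}$ and $1-1/n$ both lie in $[0,1]$. Their difference is therefore at most $1\le 16d^2/n^2$.

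Combining the two ranges gives
\[ \Big|\rho_{\min}(d,n,p)-\Big(1-\frac1n\Big)\Big|\le \frac{16d^2}{n^2} \]
for all $n\ge 1$, which is the claimed $O_d(1/n^2)$. All of the substantive difficulty lies in Theorem~\ref{theorem:fulllp}, which we take as given here.
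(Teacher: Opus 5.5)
Your proof is correct and is essentially the paper's argument: the upper bound $1-1/n$ comes from translates via \cite[Proposition~1.3]{FKY22}, and the lower bound $1-1/(n-2d+2)$ comes from Theorem~\ref{theorem:fulllp}, so the gap is $\frac{2d-2}{n(n-2d+2)}=O_d(1/n^2)$. Your explicit handling of small $n$, including $n<2d+1$ where Theorem~\ref{theorem:fulllp} does not apply, fills in a detail the paper leaves implicit.
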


The configuration $P$ mentioned in Theorem~\ref{theorem:lp} is produced by a probabilistic thinning argument in Section~\ref{sec:number-theory}.
It is collinear, but our proof does not force it to be a consecutive arithmetic progression.
By contrast, Theorem~\ref{theorem:fulllp}, proved in Section~\ref{sec:fulllp}, becomes much simpler because it allows a non-collinear pattern $P$.

We will begin the proof of Theorem~\ref{theorem:lp} by reducing it to the following quantitative number-theoretic statement about a certain uniform density property for degree-$p$ polynomials modulo $1$. 

\begin{proposition}
\label{proposition:main}
Fix an integer $p\ge 1$. There exists a constant $K_p\in(0,\infty)$ with the following property. For every sufficiently large integer $n$ there exist a set $P\subseteq\Z$ with $n=\card P$ and a real number $A$ such that for every $B_1,\dots,B_{p-1}\in\R$ the set
\begin{equation}\label{eq:thepolynomialset}
\big\{(A k^p + B_{p-1}k^{p-1}+\cdots+B_1 k) \bmod 1 \,:\, k\in P\big\}\subseteq\T
\end{equation}
intersects every interval on $\T$ of length
\begin{equation}\label{eq:whatisepsilon}
\varepsilon_n = K_p \frac{\log n}{n}.
\end{equation}
\end{proposition}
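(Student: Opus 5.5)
Take a large integer $N$ polynomial in $n$, say $N = n^{c_p}$ with $c_p$ depending only on $p$. Choose $A$ badly approximable in a suitable sense, for instance algebraic of degree greater than $p$ or with good Diophantine properties. Then select $P\subseteq\{1,\dots,N\}$ by probabilistic thinning: each $k$ is kept independently with probability $q\approx 2n/N$, and we then trim to exactly $n$ points. The required property is that for every $(B_1,\dots,B_{p-1})$ and every interval $I\subseteq\T$ of length $\varepsilon_n$, some $k\in P$ has $\phi_B(k):=Ak^p+B_{p-1}k^{p-1}+\dots+B_1k \in I \pmod 1$.

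\textbf{Step 1 (deterministic equidistribution on the full range).} For every $B$, the sequence $(\phi_B(k))_{k\le N}$ should have discrepancy at most $N^{-\eta}$ for some $\eta=\eta_p>0$, uniformly in $B$. By the Erd\H{o}s--Tur\'an inequality this reduces to Weyl sums $\sum_{k\le N} e(h\phi_B(k))$ with $1\le |h|\le H$. Weyl's inequality bounds these through the leading coefficient $hA$ alone, so the bound is uniform in the lower coefficients $B_j$. This uniformity is exactly why only $A$ is fixed. It suffices that $hA$ has a rational approximation $a/q'$ with $N^{\delta}\le q'\le N^{p-\delta}$ for all $|h|\le H=N^{\eta}$, which a badly approximable-type choice of $A$ guarantees. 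Consequently every interval of length $\varepsilon/2$ contains at least $\tfrac12 \cdot \tfrac{\varepsilon}{2}N$ values $\phi_B(k)$, $k\le N$, once $\varepsilon\gg N^{-\eta}$.

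\textbf{Step 2 (discretizing $B$ and the intervals).} The condition must hold for uncountably many $B$, so we make it finite. Fix a grid of intervals of length $\varepsilon/2$ on $\T$; every interval of length $\varepsilon$ contains one of them. Next, perturbing $B_j$ by $\beta_j$ moves $\phi_B(k)$ by at most $\sum_j|\beta_j|N^j$. A grid of mesh about $\varepsilon N^{-j}/(4p)$ in each $B_j\in[0,1)$ therefore suffices, since by periodicity in each $B_j$ we may work modulo $1$ on the integers. Shrinking the target intervals to $\varepsilon/4$ keeps the conclusion valid for all $B$. The number of pairs (grid point, interval) is then at most $(N/\varepsilon)^{O(p^2)} = n^{O_p(1)}$.

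\textbf{Step 3 (union bound).} For a fixed pair, Step 1 gives at least $c\varepsilon N$ admissible $k$. Each such $k$ survives independently with probability $q\approx 2n/N$, so the probability that none survives is at most $(1-q)^{c\varepsilon N}\le e^{-2c\varepsilon n}=n^{-2cK_p}$. We choose $K_p$ so large that this beats the $n^{O_p(1)}$ count; this is exactly where $\log n/n$ enters. Chernoff's bound gives $|P|\ge n$ with high probability. Deleting points can only hurt, so instead we keep probability $q=n/(2N)$, aiming for $\lesssim n$ points, and pad with arbitrary further integers up to exactly $n$ points; adding points preserves the hitting property. Rescaling $K_p$ then finishes.

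\textbf{Main obstacle.} I expect the main obstacle to be Step 1: making the Weyl-sum bound uniform in $B$ and in $h\le N^\eta$ through a single choice of $A$, in particular controlling the Diophantine approximations of $hA$ simultaneously. For $p=1$ this is trivial via three-distance/continued fractions. For general $p$, I would take $A$ with bounded partial quotients, so that $\|qA\|\gg 1/q$. Dirichlet approximations of $hA$ at scale $N^{p-1}$ then have denominator at least $N^{1-o(1)}/h$, which suffices for Weyl's inequality with a saving $N^{-\eta}$.
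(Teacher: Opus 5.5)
Your proposal is correct and follows essentially the paper's route: equidistribution of $(\phi_B(k))$ over a polynomially long range via Erd\H{o}s--Tur\'an and Weyl's inequality uniformly in $B$, then a finite net in $(B_1,\dots,B_{p-1})$ of mesh $\asymp \varepsilon_n N^{-j}$ together with a net of intervals, then random thinning with a union bound over $n^{O_p(1)}$ events, which is exactly where the $\log n$ comes from. Your bounded-partial-quotients fix for Step~1, using Dirichlet approximation of $hA$ at scale $N^{p-1}$, does work. The paper simply avoids what you called the main obstacle by taking $A=1/Q$ with $Q$ prime, $n^{2^p}<Q<2n^{2^p}$, and the range $\{0,\dots,Q-1\}$. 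Then every $mA$ with $1\le m\le Q-1$ is already a reduced fraction with denominator exactly $N=Q$, so Weyl applies directly with no Diophantine input. The paper also uses a uniformly random $n$-subset instead of Bernoulli thinning plus padding.
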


In Section~\ref{sec:number-theory}, we will prove Proposition~\ref{proposition:main} by combining Weyl's inequality, the Erd\H{o}s--Tur\'an discrepancy bound, and a random pattern thinning argument.
The first two ingredients essentially mirror the approach used in \cite[Section~4]{FKY22} for $p=2$, while the third seems somewhat novel in this context. It was suggested to us by \emph{ChatGPT} 5.4 Pro \cite{chatgpt}.
Afterwards, Yann Bugeaud informed us of several papers in Diophantine approximation theory \cite{PS10,Mos09,Mos10a,Mos10b,BM11,BM12,GM23}, where the probabilistic method is used only at the cost of losing a logarithmic factor in an approximation estimate.

\subsection*{Notation}
We adopt standard asymptotic notation:
\begin{itemize}
\item the \emph{Bachmann--Landau convention}, writing $f(x) = O(g(x))$ when there exists an absolute constant $C > 0$ such that $|f(x)| \leq C g(x)$ for all $x$ in the common domain of $f$ and $g$, and
\item \emph{Vinogradov's notation} $f(x) \ll g(x)$ as a synonym for $f(x) = O(g(x))$. 
\end{itemize}
When the implied constant depends on a set of parameters $\mathcal{P}$, we emphasize this notationally with subscripts, i.e., by writing $f(x) = O_{\mathcal{P}}(g(x))$ and $f(x) \ll_{\mathcal{P}} g(x)$.
We also write $f(x) \asymp g(x)$ when both $f(x) \ll g(x)$ and $g(x) \ll f(x)$ hold.

Let $\T=\R/\Z$ denote the one-dimensional torus. It inherits the addition from $\R$, while the Lebesgue measure is transferred from $[0,1)$ via the bijection $[0,1)\to\T$, $t\mapsto t+\Z$. The canonical projection $\R\to\T$ is often rather written as $t\mapsto t\bmod 1 := t+\Z$. 
Let us also write $\|x\| := \dist(t,\Z)$ for every ``point'' $x=t+\Z$ on $\T$, thus defining the function $\|\cdot\|\colon\T\to[0,1/2]$. Clearly, $\|x+y\|\leq\|x\|+\|y\|$ for any $x,y\in\T$. In particular, $(x,y)\mapsto\|x-y\|$ is a metric on $\T$, the so-called ``arclength'' distance on the torus. An interval on $\T$ is an image of an interval $I\subseteq \R$ via the canonical projection.

The logarithm is always understood to have base $e$. We respectively write $\lfloor t\rfloor$ and $\lceil t\rceil$ for the floor and the ceiling of a real number $t$. We will keep writing just $|E|$ for the Lebesgue measure of a set $E\subseteq\R^d$. The cardinality of a finite set $P\subseteq\R^d$ will instead be written $\card P$. The standard basis of $\R^d$ is denoted $\mathbbm{e}_1,\ldots,\mathbbm{e}_d$.


\section{Reduction of Theorem~\ref{theorem:lp} to Proposition~\ref{proposition:main}}
\label{sec:reduction}

Theorem~\ref{theorem:l2} becomes a special case of Theorem~\ref{theorem:lp} after taking into account its last claim about the uniformity in $d$ of the constants $C_{d,p}$ for a fixed even $p$ (and in particular for $p=2$). Thus, we only need to show the second theorem and we will do so assuming Proposition~\ref{proposition:main}.
We will split the proof of Theorem~\ref{theorem:lp} into two parts according to the parity of the number $p$. The case of even $p$ is more direct and similar to the approach in \cite{FKY22}. 
The case of odd $p$ requires an additional idea of inserting $\pm1$ signs to overcome the fact that $x\mapsto \|x\|_p^p$ is no longer a polynomial in $x=(x_1,\ldots,x_d)$.

Throughout this section we assume that $K_p\in(0,\infty)$, $\varepsilon_n\in(0,1)$, $P\subseteq\Z$, and $A\in\R$ are given by Proposition~\ref{proposition:main} for a fixed sufficiently large $n$, and we also set 
\[ r_j := (A+j)^{1/p} \]
for all positive integers $j$ sufficiently large that $A+j>0$.

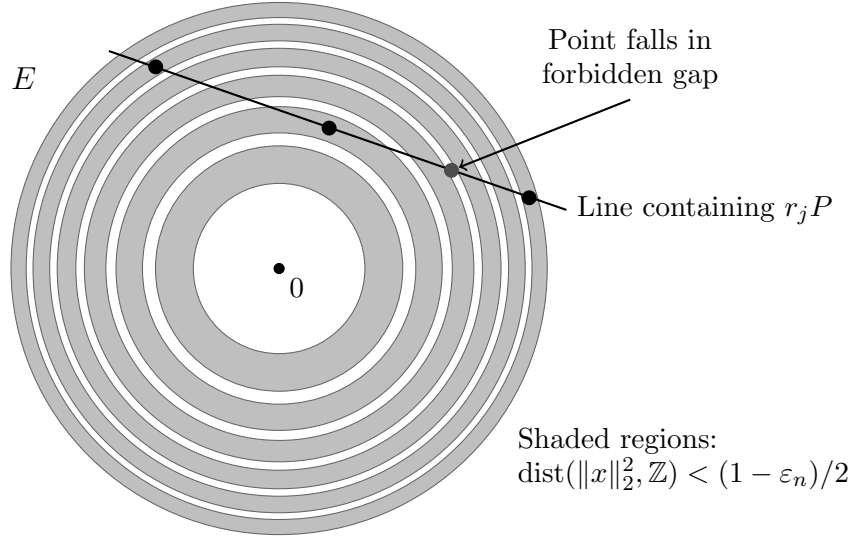
\begin{figure}
\begin{flushleft}
\hspace*{0.24\textwidth}
\begin{tikzpicture}[scale=1.4]
    \foreach \k in {1, 2, 3, 4, 5, 6} {
        \pgfmathsetmacro{\rIn}{sqrt(\k - 0.35)}
        \pgfmathsetmacro{\rOut}{sqrt(\k + 0.35)}
        \fill[black!25, even odd rule] (0,0) circle (\rOut) (0,0) circle (\rIn);
        \draw[black!60, thin] (0,0) circle (\rOut);
        \draw[black!60, thin] (0,0) circle (\rIn);
    }
    \fill[black] (0,0) circle (1.5pt) node[below right, black] {$0$};
    \draw[thick, black] (-1.6, 2.06) -- (2.7, 0.555) node[right, black] {Line containing $r_j P$};
    \fill[black] (-1.16, 1.91) circle (2pt);
    \fill[black] (0.47, 1.33) circle (2pt);
    \fill[black!70] (1.62, 0.93) circle (2pt); 
    \draw[->, thick, black, shorten >= 3pt] (3.3, 1.6) node[above, text width=3cm, align=center] {Point falls in \\ forbidden gap} -- (1.62, 0.93);
    \fill[black] (2.35, 0.67) circle (2pt);
    \node[black, font=\large] at (-2.4, 1.8) {$E$};
    \node[text width=4.5cm, align=left] at (3.85, -1.8) 
        {Shaded regions: \\ $\operatorname{dist}(\|x\|_2^2,\mathbb{Z}) < (1-\varepsilon_n)/2$};
\end{tikzpicture}
\end{flushleft}
\caption{The set $E$ for $p=2$.}
\label{fig:annuli}
\end{figure}

\begin{proof}[Proof of Theorem~\ref{theorem:lp} for even $p$]
As in \cite[Section~4]{FKY22}, we generalize Bourgain's construction with annuli around the origin \cite[Section~1]{Bou86} and define the set
\[ E := \Big\{x\in\R^d : \dist\bigl(\|x\|_p^p,\Z\bigr) < \frac{1-\varepsilon_n}{2}\Big\}. \]
Its instance for $p=2$ is illustrated in Figure~\ref{fig:annuli}.
For each fixed $(x_2,\dots,x_d)\in[-R/2,R/2]^{d-1}$, formula \eqref{eq:onedens2} from Lemma~\ref{lem:one-variable-density} applied with $\sigma=1$ to the interval
\[ I=\Big(-\frac{1-\varepsilon_n}{2},\frac{1-\varepsilon_n}{2}\Big)-\|(x_2,\dots,x_d)\|_p^p \]
of length $1-\varepsilon_n$ gives
\[ \Big|\Big\{x_1\in\Big[-\frac{R}{2},\frac{R}{2}\Big] : (x_1,x_2,\dots,x_d)\in E\Big\}\Big|
= (1-\varepsilon_n)R + O_p(1). \]
Integrating in $x_2,\dots,x_d$ we obtain
\[ \Big|E\cap\Big[-\frac{R}{2},\frac{R}{2}\Big]^d\Big| = (1-\varepsilon_n)R^d + O_{d,p}(R^{d-1}), \]
so
\[ \textup{d}(E)=1-\varepsilon_n = 1- K_p \frac{\log n}{n}. \]

Now suppose, for contradiction, that the set $E$ contains an $\ell^p$-isometric copy of $r_jP$ for some $j$.
By Lemma~\ref{lem:collinear-copy}, this copy has the form
\begin{equation}\label{eq:whatisY}
Y = \{x + r_j k v : k\in P\}
\end{equation}
for some 
\begin{equation}\label{eq:whatisXV}
x=(x_1,\ldots,x_d),\quad v=(v_1,\ldots,v_d) 
\end{equation}
with $\|v\|_p=1$.
For $k\in P$, use the binomial theorem to expand
\[ \|x+r_jkv\|_p^p 
= \sum_{i=1}^{d} \sum_{l=0}^{p} \binom{p}{l} x_i^{p-l} r_j^l k^l v_i^l
= r_j^p \|v\|_p^p k^p + \sum_{l=0}^{p-1} B_l k^l
= (A+j) k^p + \sum_{l=0}^{p-1} B_l k^l, \]
where
\[ B_l := \binom{p}{l} r_j^l \sum_{i=1}^{d} x_i^{p-l} v_i^l \quad \text{for } 0\leq l\leq p-1. \]
Since $Y\subseteq E$, all points 
\[ \|x+r_jkv\|_p^p \bmod 1, \quad k\in P,\]
lie inside a single interval of length $1-\varepsilon_n$ on $\T$ by the definition of $E$.
After subtracting the constant term $B_0$ and the integer expression $jk^p$, we conclude the same for the set \eqref{eq:thepolynomialset}, which contradicts Proposition~\ref{proposition:main}.
Therefore, $E$ contains no $\ell^p$-isometric copy of $r_jP$ for any index $j$.
\end{proof}

\begin{proof}[Proof of Theorem~\ref{theorem:lp} for odd $p$]
For every $\sigma=(\sigma_1,\dots,\sigma_d)\in\{-1,1\}^d$ write
\[ F_\sigma(x_1,\dots,x_d) := \sigma_1 x_1^p + \cdots + \sigma_d x_d^p \]
and define
\[ E_\sigma := \Big\{x\in\R^d : \dist\bigl(F_\sigma(x),\Z\bigr) < \frac{1-\varepsilon_n}{2}\Big\}. \]
Then set
\[ E := \bigcap_{\sigma\in\{-1,1\}^d} E_\sigma. \]
Fix $\sigma\in\{-1,1\}^d$ and $(x_2,\dots,x_d)\in[-R/2,R/2]^{d-1}$. Lemma~\ref{lem:one-variable-density} gives
\[ \Big|\Big\{x_1\in\Big[-\frac{R}{2},\frac{R}{2}\Big] : (x_1,x_2,\dots,x_d)\in E_\sigma\Big\}\Big| = (1-\varepsilon_n)R + O_p(1). \]
Integrating in $x_2,\dots,x_d$ and considering complements we get
\[ \Big|\Big[-\frac{R}{2},\frac{R}{2}\Big]^d\setminus E\Big|
\le \sum_{\sigma\in\{-1,1\}^d} \Big|\Big[-\frac{R}{2},\frac{R}{2}\Big]^d\setminus E_\sigma\Big|
\le 2^d \varepsilon_n R^d + O_{d,p}(R^{d-1}), \]
which quickly yields
\[ \underline{\textup{d}}(E) \geq 1 - 2^d \varepsilon_n = 1 - 2^d K_p \frac{\log n}{n}. \]

Suppose that, for some $j$, the set $E$ contains an $\ell^p$-isometric copy of $r_jP$, which, by Lemma~\ref{lem:collinear-copy}, is necessarily of the form \eqref{eq:whatisY} for some $d$-tuples \eqref{eq:whatisXV} satisfying $\|v\|_p=1$.
Choose $\sigma=(\sigma_1,\dots,\sigma_d)$ by
\[ \sigma_i := \begin{cases}
1 & \text{if } v_i\geq0,\\
-1 & \text{if } v_i<0
\end{cases} \]
for each $i=1,2,\ldots,d$.
Then
\[ F_\sigma(x+r_jkv)
= \sum_{i=1}^{d} \sigma_i \sum_{l=0}^{p} \binom{p}{l} x_i^{p-l} r_j^l k^l v_i^l
= A k^p + \sum_{l=0}^{p-1} B_l k^l + j k^p \]
for suitable real numbers $B_0,B_1,\ldots,B_{p-1}$, where the leading coefficient (the coefficient of $k^p$) is computed as
\[ r_j^p \sum_{i=1}^d \sigma_i v_i^p = r_j^p \sum_{i=1}^d |v_i|^p
= r_j^p \|v\|_p^p = A+j. \]
Since $Y\subseteq E\subseteq E_\sigma$, all values $F_\sigma(x+r_jkv)\bmod 1$ lie inside one interval of length $1-\varepsilon_n$ on $\T$.
Translating by $B_0$, we conclude that the set \eqref{eq:thepolynomialset} also lies in an interval on $\T$ of length $1-\varepsilon_n$, contradicting Proposition~\ref{proposition:main} again.
\end{proof}


\section{Proof of Proposition~\ref{proposition:main}}
\label{sec:number-theory}

\emph{Discrepancy} of $N$ points $(x_k)_{k=0}^{N-1}$ on $\T$ is defined as the number
\[ D_N\big((x_k)_{k=0}^{N-1}\big) := \sup_{\substack{I\subseteq\T\\I\,\text{interval}}} \bigg| \frac{\mathop{\textup{card}}\big\{ k\in\{0,1,\ldots,N-1\} :  x_k \in I \big\}}{N} - |I| \bigg|. \]
Write
\[ \e(x):=\exp(2\pi i x). \]
A way of estimating the discrepancy is via the exponential sums $\sum_{k=0}^{N-1} \e(m x_k)$ and the well-known \emph{Erd\H{o}s--Tur\'{a}n inequality} \cite[Theorem~2.5]{KN74},
\begin{equation}\label{eq:ErdosTuran}
D_N\big((x_k)_{k=0}^{N-1}\big) \ll \frac{1}{M} + \sum_{m=1}^{M} \frac{1}{m} \Big| \frac{1}{N} \sum_{k=0}^{N-1} \e(m x_k) \Big|,
\end{equation}
where $M$ is any (appropriately chosen) positive integer and the implicit constant is an absolute one.
The proof will also use the classical \emph{Weyl's inequality} \cite[Lemma~2.4]{Vaughan97} in the following form. Let $\alpha$ be a real number for which there exist coprime integers $a,q$ such that $|\alpha-a/q|\le q^{-2}$, let 
\[ f(t)=\alpha t^p + \alpha_{p-1} t^{p-1} + \cdots + \alpha_1 t + \alpha_0 \]
be a real polynomial of degree $p\geq1$ with the leading coefficient $\alpha$, and let $\eta>0$ be arbitrary.
Then for every positive integer $N$ we have
\begin{equation}\label{eq:Weyls_ineq}
\Big|\sum_{k=0}^{N-1} \e\bigl(f(k)\bigr)\Big|
\ll_{p,\eta}
N^{1+\eta}\Bigl( \frac{1}{q} + \frac{1}{N} + \frac{q}{N^{p}} \Bigr)^{1/2^{p-1}}.
\end{equation}
Note that the bound is uniform in the lower-order coefficients $\alpha_0,\alpha_1,\ldots,\alpha_{p-1}$.
The main novelty of the proof (compared to \cite{FKY22}) is that discrepancy will be estimated for a significantly larger set of points, and then we will randomly choose its $n$-element subset, making sure that the desired property is satisfied with a positive probability. 

\begin{proof}[Proof of Proposition~\ref{proposition:main}]
Choose a prime $Q$ with
\[ n^{2^p} < Q < 2n^{2^p}, \]
which is possible by Bertrand's postulate for all sufficiently large $n$. Set
\[ A := \frac{1}{Q}. \]
For each fixed vector
\[ B=(B_1,\dots,B_{p-1})\in\R^{p-1} \]
and for every $k\in\Z$ 
\[ \Big( A k^p + B_{p-1}k^{p-1} + \cdots + B_1 k \Big) \bmod 1 \]
coincides with
\[ x_k(B) := \Big( \frac{k^p}{Q} + B_{p-1}k^{p-1}+\cdots+B_1k \Big) \bmod 1. \]

We begin by considering (a quite large) collection $\big(x_k(B)\big)_{k=0}^{Q-1}$ of $Q$ points on $\T$.
Let us estimate the discrepancy of the full collection.
For each integer $1\le m\le Q-1$ we apply Weyl's inequality \eqref{eq:Weyls_ineq} with $N=Q$, $\alpha=a/q=m/Q$ (which is already a reduced fraction), $\eta=2^{-p}$, and the polynomial
\[ f(t) = \frac{m}{Q}t^p + mB_{p-1}t^{p-1}+\cdots+mB_1t. \]
It gives us
\begin{equation}
\label{eq:weyl-Sm}
\Big| \sum_{k=0}^{Q-1} \e\big(m\,x_k(B)\big) \Big| \ll_p Q^{1-2^{-p}}
\end{equation}
uniformly in $m$ and in $B$.
The discrepancy of our collection is then controlled by the Erd\H{o}s--Tur\'{a}n inequality \eqref{eq:ErdosTuran} with $N=Q$ and $M=Q-1$ as
\[ D_Q\Big(\big(x_k(B)\big)_{k=0}^{Q-1}\Big)
\ll \frac{1}{Q-1} + \sum_{m=1}^{Q-1} \frac{1}{m}\Big|\frac{1}{Q} \sum_{k=0}^{Q-1} \e\big(m\,x_k(B)\big) \Big|, \]
which, in combination with \eqref{eq:weyl-Sm} and $Q\asymp n^{2^p}$, yields
\[ D_Q\Big(\big(x_k(B)\big)_{k=0}^{Q-1}\Big)
\ll_p \frac{1}{Q} + \frac{1}{Q^{2^{-p}}}\sum_{m=1}^{Q-1}\frac{1}{m}
\ll_p \frac{\log Q}{Q^{2^{-p}}} \ll_p \frac{\log n}{n} \]
uniformly in $B$.
In other words, if $K_p$ is chosen large enough and $\varepsilon_n$ is as in \eqref{eq:whatisepsilon}, then
\[ D_Q\Big(\big(x_k(B)\big)_{k=0}^{Q-1}\Big) \le \frac{\varepsilon_n}{10} \qquad\text{for every }B\in\R^{p-1}. \]
Hence, every interval $I\subseteq\T$ of length $9\varepsilon_n/10$ contains at least
\[ \Big(\frac{9\varepsilon_n}{10}-\frac{\varepsilon_n}{10}\Big)Q
= \frac{4}{5}\varepsilon_n Q \]
indices $k\in\{0,\dots,Q-1\}$ such that $x_k(B)\in I$.

Observe that only the classes of the coefficients $B_1,\ldots,B_{p-1}$ modulo $1$ matter. For $i=1,\dots,p-1$ choose a torus subdivision $\mathcal{B}_i\subseteq\T$ (which is just a finite ordered collection of points) of mesh
\[ \Delta_i := \frac{\varepsilon_n}{100p\,Q^i} \]
and cardinality $|\mathcal{B}_i| \ll Q^i/\varepsilon_n$. Let
\[ \mathcal{B}:=\mathcal{B}_1\times\cdots\times\mathcal{B}_{p-1}\subseteq \T^{p-1}, \]
so that the size of this $(p-1)$-dimensional grid satisfies
\begin{equation}\label{eq:B-net-size}
|\mathcal{B}| \ll_p \frac{Q^{1+2+\cdots+(p-1)}}{\varepsilon_n^{p-1}}
= \frac{Q^{p(p-1)/2}}{\varepsilon_n^{p-1}}.
\end{equation}
Let $\mathcal{I}$ be the family of torus intervals of length $9\varepsilon_n/10$ whose left endpoints are the integer multiples of $\varepsilon_n/100$ that belong to $[0,1)$. Namely, $\mathcal{I}$ consists of the intervals
\[ \Big[\frac{\varepsilon_n l}{100},\frac{\varepsilon_n l}{100}+\frac{9\varepsilon_n}{10}\Big) \bmod 1, \quad l=0,1,2,\ldots,\Big\lceil\frac{100}{\varepsilon_n}\Big\rceil-1. \]
Clearly,
\begin{equation}
\label{eq:I-net-size}
|\mathcal{I}| \ll \frac{1}{\varepsilon_n}.
\end{equation}
Fix $b\in\mathcal{B}$ and $I\in\mathcal{I}$, and define
\[ R_{b,I} := \big\{ k\in\{0,\dots,Q-1\} \,:\, x_k(b)\in I \big\}. \]
By the previous part of the proof,
\begin{equation}\label{eq:firstb} 
|R_{b,I}|\ge \frac{4}{5}\varepsilon_n Q. 
\end{equation}

The next step is thinning of the set $\{0,1,2,\ldots,Q-1\}$ to exactly $n$ indices. Choose $P$ uniformly at random among all of its $n$-element subsets. Thanks to \eqref{eq:firstb}, the probability that a random $n$-subset $P$ is disjoint from $R_{b,I}$ is
\[ \frac{\binom{Q-|R_{b,I}|}{n}}{\binom{Q}{n}}
\leq \Big(1-\frac{|R_{b,I}|}{Q}\Big)^n 
\leq \Big(1-\frac{4}{5}\varepsilon_n\Big)^n
\leq e^{-4\varepsilon_n n/5}. \]
By the union bound, the probability that a random $n$-subset $P$ is disjoint from $R_{b,I}$ for at least one pair $(b,I)\in\mathcal{B}\times\mathcal{I}$ is now at most
\[ |\mathcal{B}|\,|\mathcal{I}| \,e^{-4\varepsilon_n n/5}. \]
From \eqref{eq:B-net-size}, \eqref{eq:I-net-size}, \eqref{eq:whatisepsilon}, and $Q\asymp n^{2^p}$, this is
\[ \ll_p \frac{Q^{p(p-1)/2}}{\varepsilon_n^p} \,e^{-4\varepsilon_n n/5}
\asymp_p \exp \bigg( \Big(p(p-1)2^{p-1} + p - \frac{4}{5}K_p\Big) \log n - p \log\log n \bigg). \]
As soon as $K_p$ is sufficiently large, the last number will be strictly less than $1$ for every large enough $n$.
In other words, there exists at least one $P\subseteq\{0,1,2,\ldots,Q-1\}$ with $n=\card P$ such that 
\[ P\cap R_{b,I}\neq\emptyset, \]
i.e.,
\begin{equation}\label{eq:firstx} 
\{x_k(b) : k\in P\}\cap I\neq\emptyset 
\end{equation}
for every $b\in\mathcal{B}$ and every $I\in\mathcal{I}$.

Finally, for the obtained $P\subseteq\Z$, we take care of arbitrary $(p-1)$-tuples of coefficients $B$ by approximating them with points from the above discrete grid $\mathcal{B}$. Namely, take an arbitrary $B=(B_1,\dots,B_{p-1})\in\T^{p-1}$ and an arbitrary interval $J\subseteq\T$ of length $\varepsilon_n$. 
Choose $b=(b_1,\dots,b_{p-1})\in\mathcal{B}$ so that
\[ \|B_i-b_i\|\le \Delta_i \quad\text{for } i=1,2,\ldots,p-1. \]
Then, for every $k\in P$,
\[ \|x_k(B)-x_k(b)\| \le \sum_{i=1}^{p-1} \|(B_i-b_i)k^i\| 
\le \sum_{i=1}^{p-1} \Delta_i Q^i \le \frac{\varepsilon_n}{100}. \]
Take an interval $J'\subseteq J\subseteq\T$ with the same center as $J$ and length $96\varepsilon_n/100$. It contains some $I\in\mathcal{I}$. By \eqref{eq:firstx} there exists $k\in P$ such that $x_k(b)\in I\subseteq J'$, and then we have $x_k(B)\in J$.
Therefore,
\[ \{x_k(B) : k\in P\}\cap J\neq\emptyset, \]
so, for every $B_1,\dots,B_{p-1}\in\R$, the set \eqref{eq:thepolynomialset} intersects every interval of length $\varepsilon_n$ on $\T$. This proves the proposition.
\end{proof}

Stronger Vinogradov's mean value theorem or decoupling forms of Weyl's inequality would improve the required power of $n$ in $Q$, but they would not remove the final logarithm, which comes from the random thinning step.


\section{Proof of Theorem~\ref{theorem:fulllp}}
\label{sec:fulllp}

The proof presented in this section is geometric and does not use number-theoretic arguments from the previous section.

\begin{proof}[Proof of Theorem~\ref{theorem:fulllp}]
Define
\[ P:= \big\{ k\mathbbm{e}_1 \,:\, k\in\{-1,0,1,2,\ldots,n-2d \} \big\}
\cup \{\mathbbm{e}_2,\ldots,\mathbbm{e}_d\} 
\cup \{-\mathbbm{e}_2,\ldots,-\mathbbm{e}_d\} \subseteq \R^d, \]
so that $\card P = n$; see Figure~\ref{fig:lpset} for the special case $d=2$ and $n=8$.
Also set
\[ \varepsilon := \frac{1}{n-2d+2} \]
and
\[ E := \big\{ x =(x_1,\ldots,x_d)\in \R^d \,:\, (x_1+\cdots+x_d) \bmod 1 \in [0,1-\varepsilon) \bmod 1 \big\}. \]
For each fixed $(x_2,\dots,x_d)\in[-R/2,R/2]^{d-1}$, the set of all $x_1\in[-R/2,R/2]$ for which $x_1 \bmod 1$ falls into the prescribed interval on $\T$ of length $1-\varepsilon$ has measure $(1-\varepsilon)R + O(1)$. Integration in $x_2,\dots,x_d$ gives
\[ \Big|E\cap\Big[-\frac{R}{2},\frac{R}{2}\Big]^d\Big| = (1-\varepsilon)R^d + O_d(R^{d-1}), \]
so
\[ \textup{d}(E) = 1-\varepsilon = 1-\frac{1}{n-2d+2}. \]

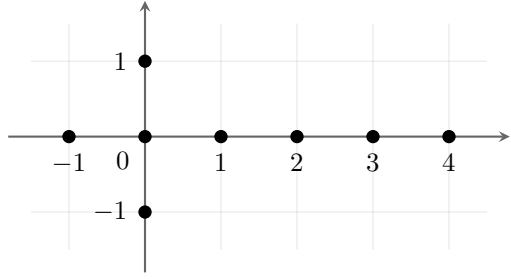
\begin{figure}
\begin{center}
\begin{tikzpicture}[scale=1, >=stealth]
    \draw[black!10, very thin, step=1.0] (-1.5,-1.5) grid (4.5,1.5);
    \draw[->, thick, black!60] (-1.8,0) -- (4.8,0);
    \draw[->, thick, black!60] (0,-1.8) -- (0,1.8);
    \foreach \x in {-1, 1, 2, 3, 4} {
        \draw[thick, black!60] (\x, -0.05) -- (\x, 0.05) node[below=4pt, text=black, font=\small] {$\x$};
    }
    \foreach \y in {-1, 1} {
        \draw[thick, black!60] (-0.05, \y) -- (0.05, \y) node[left=4pt, text=black, font=\small] {$\y$};
    }
    \node[below left=2pt, text=black, font=\small] at (0,0) {$0$};
    \foreach \x in {-1, 0, 1, 2, 3, 4} {
        \fill[black] (\x, 0) circle (2.5pt);
    }
    \fill[black] (0, 1) circle (2.5pt);
    \fill[black] (0, -1) circle (2.5pt);
\end{tikzpicture}
\end{center}
\caption{Configuration $P$ in the proof of Theorem~\ref{theorem:fulllp}.}
\label{fig:lpset}
\end{figure}

Define
\[ r_j:=j+\varepsilon \]
for every $j\geq1$. It remains to prove that $E$ contains no $\ell^p$-isometric copy of $r_j P$.
Assume, to the contrary, that for some $j$ a set $Y\subseteq E$ is an $\ell^p$-isometric copy of $r_j P$.
By Lemma~\ref{lem:collinear-copy}, there exist $x,u\in\R^d$ with $\|u\|_p=1$ such that points
\[ r_j k\mathbbm{e}_1, \quad k=-1,0,1,2,\ldots,n-2d, \]
are mapped to
\begin{equation}\label{eq:pointsy}
y_k := x + r_j k u, \quad k=-1,0,1,2,\ldots,n-2d 
\end{equation}
via the aforementioned $\ell^p$ isometry.
We claim that 
\begin{equation}\label{eq:whatisu}
\text{either } u=\mathbbm{e}_l \text{ or } u=-\mathbbm{e}_l \text{ for some } l\in\{1,2,\ldots,d\}. 
\end{equation}
The claim is trivial for $d=1$, so assume that $d\ge 2$.
Let the images of the remaining points of $r_j P$,
\[ r_j \mathbbm{e}_2,\ldots, r_j \mathbbm{e}_d, -r_j \mathbbm{e}_2,\ldots, -r_j \mathbbm{e}_d, \]
via the same $\ell^p$ isometry be denoted, respectively, by
\[ z_2^+, \ldots, z_d^+, z_2^-, \ldots, z_d^-. \]
Also write
\[ v_i^{\pm} := \frac{1}{r_j} (z_i^{\pm}-x) \]
for $i=2,\ldots,d$, so that $\|v_i^+\|_p=\|v_i^-\|_p=1$ for each index $i$.

First,
\[ \norm{v_i^+-v_i^-}_p = \frac{1}{r_j} \norm{z_i^+-z_i^-}_p = \norm{ \mathbbm{e}_i - (-\mathbbm{e}_i) }_p = 2. \]
Hence, 
\[ 2=\norm{v_i^+-v_i^-}_p\le \norm{v_i^+}_p+\norm{-v_i^-}_p=2. \]
Equality therefore holds in the triangle inequality for $v_i^+$ and $-v_i^-$.
Since $\ell^p$ is strictly convex for $1<p<\infty$ \cite[Chapter~5]{Meg98}, this forces 
\begin{equation}\label{eq:vpm}
v_i^-=-v_i^+
\end{equation} 
for every index $i\geq2$.

Next, the $\ell^p$ isometry turns equalities
\[ \norm{\mathbbm{e}_i-\mathbbm{e}_1}_p^p = \norm{\mathbbm{e}_i-(-\mathbbm{e}_1)}_p^p=2 \]
from the model configuration $P$ into
\[ \norm{z_i^+-y_1}_p^p = \norm{z_i^+-y_{-1}}_p^p = 2r_j^p, \]
i.e.,
\[ \norm{z_i^+-(x+r_ju)}_p^p = \norm{z_i^+-(x-r_ju)}_p^p = 2r_j^p, \]
so that we have
\[ \norm{v_i^{+}-u}_p^p = \norm{v_i^{+}+u}_p^p=2. \]
Therefore,
\[ \norm{v_i^{+}-u}_p^p + \norm{v_i^{+}+u}_p^p = 4 = 2 \big(\norm{v_i^{+}}_p^p + \norm{u}_p^p\big), \]
so Lemma~\ref{lem:scalar} implies that $v_i^+$ and $u$ share no indices with nonzero coordinates for every index $i\geq2$.

Moreover, take indices $2\leq i<m\leq d$.
Since in the model configuration we have
\[ \norm{\mathbbm{e}_i-\mathbbm{e}_m}_p^p =\norm{\mathbbm{e}_i-(-\mathbbm{e}_m)}_p^p=2, \]
the $\ell^p$ isometry turns this into
\[ \norm{v_i^+-v_m^+}_p^p=\norm{v_i^+ -v_m^-}_p^p=2, \]
i.e., thanks to \eqref{eq:vpm},
\[ \norm{v_i^+-v_m^+}_p^p=\norm{v_i^+ +v_m^+}_p^p=2. \]
Since
\[ \norm{v_i^{+}-v_m^+}_p^p + \norm{v_i^{+}+v_m^+}_p^p = 4 = 2 \big(\norm{v_i^{+}}_p^p + \norm{v_m^+}_p^p\big), \]
the equality part of Lemma~\ref{lem:scalar} applies again and shows that $v_i^+$ and $v_m^+$ have no indices with nonzero coordinates in common.

We have therefore produced $d$ nonzero $\ell^p$-unit vectors, 
\[ u,v_2^+,\dots,v_d^+, \]
with pairwise disjoint index-sets of nonzero coordinates.
Since there are only $d$ coordinates available, each index-set must be a singleton and these singletons must be distinct.
In particular, $u=\mathbbm{e}_l$ or $u=-\mathbbm{e}_l$ for some $l\in\{1,\dots,d\}$, which confirms the claim \eqref{eq:whatisu}.

Now we return to the points \eqref{eq:pointsy}, which are all supposed to lie in $E$, i.e., there exist an index $l$ and a sign $\sigma\in\{-1,1\}$ such that
\[ x + \sigma r_j k \mathbbm{e}_l \in E \quad\text{for } k=-1,0,1,\ldots,n-2d, \]
i.e.,
\[ (x_1 + \cdots + x_d + \sigma r_j k) \bmod 1 \in [0,1-\varepsilon) \bmod 1 \quad\text{for } k=-1,0,1,\ldots,n-2d. \]
Consequently,
\[ \Big\{ \frac{k}{n-2d+2} \bmod 1 \,:\, k\in\{-1,0,1,2,\ldots,n-2d\} \Big\} \]
is contained in a half-open interval on $\T$ of length $1-1/(n-2d+2)$, which is impossible, because this is a set of $n-2d+2$ equally spaced points on $\T$.
\end{proof}


\section{An elementary approach}
\label{sec:elementary}

It is sufficient to establish Proposition~\ref{proposition:main} in the case $p=2$ and with $\varepsilon_n$ replaced by $10n^{-1/2}$. Afterwards, one can use the same construction of the set $E$ from the first proof in Section~\ref{sec:reduction}.
We simply choose the arithmetic progression \eqref{eq:pis0n} as the pattern $P$. We also assume that $n\geq4$. Take $m=\lfloor n^{1/2}\rfloor$ and split $\{0,1,2,\ldots,m^2-1\}$ into $m$ integer blocks of length $m$:
\[ \big\{im+l \,:\, i,l\in\{0,1,\ldots,m-1\} \big\} \subseteq P. \]
Also set
\[ A := \frac{1}{m^2} \]
and fix an arbitrary $B\in\R$. 

We first select an index $i\in\{0,1,\ldots,m-1\}$ such that 
\[ \Big( B + \frac{2i}{m} \Big) \bmod 1 \in \Big[\frac{1}{m},\frac{3}{m}\Big) \bmod 1 \]
and let $\theta\in[1/m,3/m)$ coincide with $B+2i/m$ modulo $1$.
For this fixed $i$, we have
\begin{align*}
& \big( A (im+l)^2 + B (im+l) \big) \bmod 1 \\
& = \Big( B i m + \Big( B + \frac{2i}{m} \Big) l + \frac{l^2}{m^2} \Big) \bmod 1 \\ 
& = t_l \bmod 1, 
\end{align*}
where $t_l\in\R$ is defined as
\[ t_l := B i m + \theta l + \frac{l^2}{m^2} \]
for every $l\in\{0,1,\ldots,m-1\}$.
Then, for $0\leq l\leq m-2$,
\[ t_{l+1} - t_{l} = \theta + \frac{2l+1}{m^2} \in \Big[\frac{1}{m},\frac{5}{m}\Big), \]
so consecutive terms among $t_0,t_1,\ldots,t_{m-1}$ are separated by less than $5/m$. Also,
\[ t_{m-1} - t_0 = \theta (m-1) + \frac{(m-1)^2}{m^2} \geq 1 - \frac{1}{m} + \Big(1-\frac{1}{m}\Big)^2 \geq 1, \]
so
\[ \big\{ t_l \bmod 1 \,:\, l\in\{0,1,\ldots,m-1\} \big\} \]
intersects every interval on $\T$ of length $5/m\leq 10n^{-1/2}$.
Consequently, its superset 
\[ \big\{ (A k^2 + B k) \bmod 1 \,:\, k\in P \big\} \]
appearing in \eqref{eq:thepolynomialset} has the same property and we are done.


\appendix

\section{Auxiliary lemmas}

We will need several simple auxiliary results. Each of them is either entirely elementary, or very standard, so they did not find a place in the main text.

\begin{lemma}
\label{lem:one-variable-density}
Fix an integer $p\geq2$. For every interval $I\subseteq\T$, every $\sigma\in\{-1,1\}$, and every $R\geq 1$ we have
\begin{equation}\label{eq:onedens2}
\Big|\Big\{t\in\Big[-\frac{R}{2},\frac{R}{2}\Big] \,:\, \sigma t^p \bmod 1 \in I \Big\}\Big| = |I|R + O_p(1) 
\end{equation}
uniformly in $I$.
\end{lemma}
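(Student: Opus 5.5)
By the symmetry $t\mapsto -t$ (which preserves Lebesgue measure on $[-R/2,R/2]$), the set in \eqref{eq:onedens2} has the same measure whether we look at $t\ge0$ or $t\le0$, up to the sign $\sigma$ and the parity of $p$. So it suffices to show that for every interval $I\subseteq\T$ and every $\sigma\in\{-1,1\}$
\[ \big|\{t\in[0,R/2] : \sigma t^p \bmod 1\in I\}\big| = |I|\,\frac{R}{2} + O_p(1). \]
Replacing $I$ by $-I$ when $\sigma=-1$ reduces further to $\sigma=1$. We may also assume $I$ is an interval of length at most $1$, lifted to $[a,a+|I|)$ with $a\in[0,1)$.

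We then substitute $s=t^p$, so that $t=s^{1/p}$ and $dt=\frac1p s^{1/p-1}\,ds$. Writing $S=(R/2)^p$ and $g(s):=\frac1p s^{1/p-1}$, the measure becomes
\[ \sum_{m\ge0}\int_{[m+a,\,m+a+|I|)\cap[0,S]} g(s)\,ds . \]
The function $g$ is positive and decreasing on $(0,\infty)$ because $p\ge2$. It is integrable near $0$, and $\int_0^S g=S^{1/p}=R/2$.

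The main step is a standard comparison of each unit-period piece with its average, in the spirit of a Riemann-sum or Abel-summation argument. For each $m$ with $m+1\le S$, let $J_m=[m,m+1)$. Since $g$ is monotone on $J_m$, the integral of $g$ over the portion of $J_m$ corresponding to $I$ differs from $|I|\int_{J_m}g$ by at most $g(m)-g(m+1)$ for $m\ge1$. This is because both quantities lie between $|I|g(m+1)$ and $|I|g(m)$.

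Summing over $m\ge1$ telescopes to at most $g(1)=1/p$. The first cell $m=0$ and the last partial cell near $S$ each contribute an error bounded by $\int_0^1 g=1$ and by $\sup_{s\ge1}g\le 1/p$, respectively. Hence the total is $|I|\int_0^S g+O(1)=|I|R/2+O(1)$, uniformly in $I$ and $R\ge1$.

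The only delicate point I anticipate is the singularity of $g$ at $0$ together with the wraparound of $I$ inside a cell. The singularity is absorbed by treating the first unit cell as a crude $O(1)$ error. The wraparound is handled by splitting $I$ into at most two subintervals of $[0,1)$, each treated as above. The implied constant comes out absolute, which is even better than $O_p(1)$.
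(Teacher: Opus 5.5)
Your proof is correct. It has the same skeleton as the paper's: reduce to $t\ge0$ and $\sigma=1$, cut $[0,R/2]$ into the cells $t\in[m^{1/p},(m+1)^{1/p})$ (your $s\in[m,m+1)$), compare the part of each cell landing in $I$ with $|I|$ times the whole cell, and telescope.

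The difference lies in the per-cell estimate.
\begin{itemize}
\item The paper first reduces to $I=[0,a]$, so the relevant piece of each cell is the single interval $[m^{1/p},(m+a)^{1/p}]$. It then uses Cauchy's mean value theorem to get $(m+a)^{1/p}-m^{1/p}=a\bigl((m+1)^{1/p}-m^{1/p}\bigr)+O(m^{-2+1/p})$. The errors are summable and the main terms telescope.
\item You change variables $s=t^p$ and use only that $g(s)=\frac1p s^{1/p-1}$ is decreasing. Both the weighted portion and $|I|\int_{J_m}g$ are then sandwiched between $|I|g(m+1)$ and $|I|g(m)$, so the errors themselves telescope.
\end{itemize}

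Your route buys a few things. Only the measure $|I|$ of the portion inside each cell matters, so arbitrary and wrapped-around intervals are handled at once. The reduction to $I=[0,a]$ is therefore unnecessary, and so is your splitting of $I$. The error comes out explicitly bounded by an absolute constant. The argument also uses nothing about $t^p$ beyond the monotonicity of $\frac{d}{ds}s^{1/p}$, so it applies to other increasing maps with monotone inverse derivative.

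The paper's computation is more hands-on, but it needs a second-order comparison and the extra reduction step.

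One tiny point: when $S=(R/2)^p<1$ there is no full first cell. In that case both the measure and $|I|R/2$ are at most $\int_0^S g\le 1$, so your $O(1)$ bound still holds.
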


\begin{proof}
By splitting $[-R/2,R/2]$ into positive and negative numbers and doubling $R$, one reduces \eqref{eq:onedens2} to 
\begin{equation}\label{eq:onedens1}
\big|\big\{t\in[0,R] \,:\, \sigma t^p \bmod 1 \in I \big\}\big| = |I|R + O_p(1).
\end{equation}
By reflecting the interval $I$, if needed, we can assume that $\sigma=1$. Finally, it is enough to consider $I=[0,a]$ for some $a\in(0,1)$.
The set appearing on the left-hand side of \eqref{eq:onedens1} is now
\begin{equation}\label{eq:theauxsetest}
\Big(\bigcup_{m=0}^{\infty} \big[m^{1/p},(m+a)^{1/p}\big]\Big) \cap [0,R] 
\end{equation}
and its measure equals
\[ \sum_{m=1}^{\lfloor R^p\rfloor} \big( (m+a)^{1/p}-m^{1/p} \big) + O_p(1). \]
Applying Cauchy's mean value theorem to the functions
\[ \varphi(x) := (1+ax)^{1/p} \quad\text{and}\quad \psi(x) := (1+x)^{1/p} \]
we obtain
\[ \frac{\varphi(1/m)-\varphi(0)}{\psi(1/m)-\psi(0)} = \frac{\varphi'(\xi)}{\psi'(\xi)} = a \Big(\frac{1+a\xi}{1+\xi}\Big)^{1/p-1} \]
for some $\xi\in(0,1/m)$, which implies that
\[ a \leq \frac{(m+a)^{1/p}-m^{1/p}}{(m+1)^{1/p}-m^{1/p}} \leq a \big(1 + O_p(m^{-1})\big) \]
and then also
\[ a \big( (m+1)^{1/p}-m^{1/p} \big) \leq (m+a)^{1/p}-m^{1/p} \leq a \big( (m+1)^{1/p}-m^{1/p} \big) + O(m^{-2+1/p}). \]
Summing this in $m=1,2,\ldots,\lfloor R^p\rfloor$ and telescoping we conclude that the measure of the set \eqref{eq:theauxsetest} is really $aR+O_p(1)$, as claimed.
\end{proof}

\begin{lemma}
\label{lem:collinear-copy}
Take $p\in(1,\infty)$ and $r\in(0,\infty)$, and let $P\subseteq\R$ be a finite set.
Suppose that an indexed collection of points $Y=\{y_t:t\in P\}\subseteq\R^d$ satisfies
\[ \|y_s-y_t\|_p = r |s-t| \]
for all $s,t\in P$.
Then there exist $x,v\in\R^d$ with $\|v\|_p=1$ such that
\[ y_t = x + r t v \]
for every $t\in P$.
\end{lemma}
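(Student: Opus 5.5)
We prove Lemma~\ref{lem:collinear-copy} using the equality case of the triangle inequality in the strictly convex space $(\R^d,\norm{\cdot}_p)$, $1<p<\infty$. If $\card P\le 1$ the claim is trivial: pick any unit $v$ and $x=y_t-rtv$. So assume $\card P\ge 2$.

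Let $s_0<s_1$ be the two smallest elements of $P$. Set
\[ v:=\frac{y_{s_1}-y_{s_0}}{r(s_1-s_0)}, \qquad x:=y_{s_0}-rs_0v. \]
The hypothesis gives $\norm{v}_p=1$. It remains to show $y_t=x+rtv$ for every $t\in P$.

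\textbf{Key step.} Suppose $a<b<c$ are in $P$. Then
\[ \norm{y_c-y_a}_p=r(c-a)=r(b-a)+r(c-b)=\norm{y_b-y_a}_p+\norm{y_c-y_b}_p, \]
so equality holds in the triangle inequality for the vectors $y_b-y_a$ and $y_c-y_b$. Both are nonzero, since their norms are $r(b-a)>0$ and $r(c-b)>0$. By strict convexity of $\ell^p$ \cite[Chapter~5]{Meg98}, one is a positive multiple of the other. Hence
\[ \frac{y_b-y_a}{r(b-a)}=\frac{y_c-y_b}{r(c-b)}, \]
because both sides are unit vectors with the same direction. It follows that $\frac{y_c-y_a}{r(c-a)}$ is this same unit vector. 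If strict convexity needs justification, use the equality case of Minkowski's inequality for $p>1$: equality forces the two vectors to be nonnegatively proportional.

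\textbf{Conclusion.} Take any $t\in P$ with $t>s_1$. Apply the key step to the triple $(s_0,s_1,t)$. This gives $\frac{y_t-y_{s_0}}{r(t-s_0)}=v$, that is, $y_t=y_{s_0}+r(t-s_0)v=x+rtv$. For $t\in\{s_0,s_1\}$ the identity holds by the definitions of $x$ and $v$. Since $s_0$ and $s_1$ are the smallest elements, every other $t\in P$ satisfies $t>s_1$, so this covers all of $P$.

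The only substantive point is the equality case of Minkowski's inequality, which is where $p\in(1,\infty)$ is used. For $p=1$ or $p=\infty$ the lemma fails.
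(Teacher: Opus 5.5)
Your proof is correct and uses the same idea as the paper: the distance hypothesis forces equality in the triangle inequality, and strict convexity of $\ell^p$ for $1<p<\infty$ then forces collinearity. The only difference is cosmetic. The paper anchors $v$ at $\min P$ and $\max P$ and places each $y_t$ on the segment between them, whereas you anchor at the two smallest points and extend outward via the triple $(s_0,s_1,t)$; you also handle the degenerate case $\card P\le 1$, which the paper's formula for $v$ tacitly excludes.
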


In words, every $\ell^p$-isometric copy in $\R^d$ of a finite set $P\subseteq\R$ is again contained in a line.

\begin{proof}
Because of $p\in(1,\infty)$, the norm $\|\cdot\|_p$ is strictly convex \cite[Chapter~5]{Meg98}.
Let $a:=\min P$ and $b:=\max P$.
For any $t\in P$,
\[ \|y_b-y_a\|_p = r(b-a)
= r(b-t)+r(t-a) = \|y_b-y_t\|_p + \|y_t-y_a\|_p. \]
Thus, equality holds in the triangle inequality for the vectors $y_b-y_t$ and $y_t-y_a$.
Strict convexity implies that these two vectors are collinear and point in the same direction, so $y_t$ lies on the line segment joining $y_a$ and $y_b$.
Hence,
\[ y_t = y_a + \theta_t (y_b-y_a) \]
for some $\theta_t\in[0,1]$ and comparing distances we get
\[ \theta_t = \frac{\|y_t-y_a\|_p}{\|y_b-y_a\|_p} = \frac{t-a}{b-a}. \]
It remains to set
\[ v := \frac{y_b-y_a}{r(b-a)} \quad\text{and}\quad x := y_a - r a v. \qedhere \]
\end{proof}

\begin{lemma}[{Clarkson's inequalities \cite{Cla36}}]
\label{lem:scalar}
Take $1<p<\infty$, $p\neq 2$ and $x,y\in \R^d$. Then
\[ \|x+y\|_p^p+\|x-y\|_p^p
\begin{cases}
\ge 2\bigl(\|x\|_p^p+\|y\|_p^p\bigr) & \text{for } p>2,\\
\le 2\bigl(\|x\|_p^p+\|y\|_p^p\bigr) & \text{for } 1<p<2,
\end{cases} \]
with equalities if and only if $d$-tuples $x$ and $y$ share no indices with nonzero coordinates.
\end{lemma}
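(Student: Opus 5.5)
The plan is to reduce to a one-variable inequality for each coordinate and then sum over coordinates. Write $x=(x_1,\dots,x_d)$ and $y=(y_1,\dots,y_d)$. Then
\[ \|x+y\|_p^p+\|x-y\|_p^p-2\bigl(\|x\|_p^p+\|y\|_p^p\bigr)=\sum_{i=1}^d g(x_i,y_i), \]
where
\[ g(a,b):=|a+b|^p+|a-b|^p-2|a|^p-2|b|^p. \]
It therefore suffices to prove the scalar statement: $g(a,b)\ge0$ for $p>2$ and $g(a,b)\le0$ for $1<p<2$, with equality if and only if $ab=0$. Once this is established, the sum has the claimed sign. Since every term has the same sign, the sum vanishes exactly when each $g(x_i,y_i)$ vanishes, that is, when $x_iy_i=0$ for every $i$. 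This is precisely the condition that $x$ and $y$ share no index with nonzero coordinates.

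For the scalar inequality, first note that if $ab=0$ then $g(a,b)=0$ trivially. Otherwise, $g$ is symmetric in $a$ and $b$, even in each variable, and homogeneous of degree $p$. So we may assume $a\ge b>0$ and normalize $a=1$, $b=s\in(0,1]$. The task becomes to show that
\[ h(s):=(1+s)^p+(1-s)^p-2-2s^p \]
is strictly positive for $p>2$ and strictly negative for $1<p<2$ on $(0,1]$.

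For this we use the binomial series in $s\in(0,1)$:
\[ (1+s)^p+(1-s)^p=2\sum_{k\ge0}\binom{p}{2k}s^{2k}. \]
An alternative route is to define $\phi(t)=t^{p/2}$ and apply its convexity (for $p>2$) or concavity (for $p<2$) to the parallelogram identity $(1+s)^2+(1-s)^2=2(1+s^2)$, combined with the superadditivity or subadditivity of $\phi$. The cleaner version of this route runs as follows. Set $u=(1+s)^2$ and $v=(1-s)^2$. For $p>2$, convexity of $\phi$ gives
\[ \tfrac12\bigl(\phi(u)+\phi(v)\bigr)\ge\phi\Bigl(\tfrac{u+v}2\Bigr)=(1+s^2)^{p/2}. \]
Next, strict superadditivity of $\phi$ on $(0,\infty)$ for $p/2>1$ gives
\[ (1+s^2)^{p/2}>1+s^p. \]
Multiplying by $2$ yields $h(s)>0$. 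For $1<p<2$ both inequalities reverse, using concavity of $\phi$ and strict subadditivity (valid since $p/2<1$), so $h(s)<0$.

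The strictness of the superadditivity and subadditivity step when both arguments are positive is what drives the equality characterization, and this is the main point to verify carefully. That step reduces to checking that $(1+t)^q-1-t^q$ has a strict sign for $t>0$ and $q\ne1$, which follows by differentiating in $t$. Finally, the case $s=1$ (that is, $a=b$) is covered by the same argument, since $v=0$ causes no problem.
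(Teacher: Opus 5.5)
Your proof is correct and follows essentially the same route as the paper: reduce coordinatewise, apply Jensen's inequality for $t\mapsto t^{p/2}$ to $(a\pm b)^2$, then compare $(a^2+b^2)^{p/2}$ with $|a|^p+|b|^p$ (your strict super/subadditivity step is exactly the paper's ``monotonicity of the $\ell^p$ norms''). The differences are cosmetic: you normalize to $a=1$, $b=s$, and you take the equality case from strictness of that second step rather than from strict convexity in Jensen, which is equally valid (the binomial-series remark is never used and can be dropped).
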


\begin{proof}
It is enough to show the following coordinate-wise statement:
\[ |a+b|^p+|a-b|^p
\begin{cases}
\ge 2\bigl(|a|^p+|b|^p\bigr) & \text{for } p>2,\\
\le 2\bigl(|a|^p+|b|^p\bigr) & \text{for } 1<p<2,
\end{cases} \]
with equalities if and only if $a=0$ or $b=0$.

Substitute
\[ s:=(a+b)^2 \quad\text{and}\quad t:=(a-b)^2. \]
If $p>2$, then $t\mapsto t^{p/2}$ is a convex function on $[0,\infty)$, so Jensen's inequality followed by the monotonicity of the $\ell^p$ norms gives
\[ s^{p/2}+t^{p/2} \geq 2\Big(\frac{s+t}{2}\Big)^{p/2}
=2(a^2+b^2)^{p/2} \geq 2(|a|^p+|b|^p). \]
If $p<2$, then $t\mapsto t^{p/2}$ is concave instead, so both inequalities above are reversed. 
For the equality claim, strict convexity/concavity forces $s=t$, i.e., $ab=0$.
\end{proof}


\section*{Declaration of AI usage}
\emph{ChatGPT} 5.4 Pro \cite{chatgpt} was used to suggest and draft approaches to Proposition~\ref{proposition:main}, proofread the text, and improve the presentation. \emph{Gemini} 3.1 Pro \cite{gemini} was used to draw figures and proofread the text. However, the main mathematical ideas, the proofs as presented, and the actual writing of the final manuscript are entirely the work of the authors. We independently checked all AI-produced claims, proofs, figures, and references, and assume full responsibility for the final content.


\section*{Acknowledgments and funding}
The authors are grateful to Yann Bugeaud for a useful discussion and for suggesting several relevant references.

V.\,K. was supported in part by the Croatian Science Foundation under the project HRZZ-IP-2022-10-5116 (\emph{FANAP}).
V.\,K. was also supported in part by the European Union -- NextGenerationEU through the National Recovery and Resilience Plan 2021--2026, via an institutional grant of the University of Zagreb Faculty of Science, IK IA 1.1.3, \emph{Impact4Math}.


\bibliography{density_large_configurations}{}
\bibliographystyle{amsplain}

\end{document}